\definecolor{labelkey}{gray}{.8}
\definecolor{refkey}{gray}{.8}
\definecolor{darkred}{rgb}{0.9,0.1,0.1}
\definecolor{darkgreen}{rgb}{0,0.5,0}
\newcommand{\tnorm}[1]{{\left\vert\kern-0.25ex\left\vert\kern-0.25ex\left\vert #1 
    \right\vert\kern-0.25ex\right\vert\kern-0.25ex\right\vert}}
\newtheorem{theorem}{Theorem}[section]
\newtheorem{lemma}[theorem]{Lemma}
\newtheorem{corollary}[theorem]{Corollary}
\newtheorem{proposition}[theorem]{Proposition}
\theoremstyle{remark}
\newtheorem{remark}[theorem]{Remark}
\renewenvironment{proof}[1][Proof]{ {\itshape \noindent {#1.}} }{$\Box$
\medskip}
\numberwithin{equation}{section}
\newcommand{\R}{\mathbb{R}}
\newcommand{\e}{\ubar{e}}
\newcommand{\F}{\mathcal{F}}
\newcommand{\B}{\mathcal{B}}
\newcommand{\G}{\mathcal{G}}
\newcommand{\A}{\mathcal{A}}
\newcommand{\D}{\mathcal{D}}
\def\les{\lesssim}
\newcommand{\EE}{\mathbf{E}}
\newcommand{\Cov}{\mathrm{Cov}}
\renewcommand{\d}{\mathrm{d}}
\renewcommand{\e}{\mathrm{e}}
\begin{document}
\title{spatial decorrelation of KPZ from narrow wedge}
\author{Yu Gu \and Fei Pu}

\address[Yu Gu]{Department of Mathematics, University of Maryland, College Park, MD 20742, USA}

\address[Fei Pu]{Laboratory of Mathematics and Complex Systems,
School of Mathematical Sciences, Beijing Normal University, 100875, Beijing, China.}

\begin{abstract}
	We  study the spatial 
    decorrelation of the solution to the KPZ equation with narrow wedge initial data. For fixed $t>0$, we determine the  decay rate of the spatial covariance function, showing that $\Cov[h(t,x),h(t,0)]\sim \frac{t}{x}$ as 
    $x\to\infty$. In addition, we prove that the finite-dimensional distributions of the properly rescaled spatial average of the height function converge to those of a Brownian motion. 
  
    \bigskip
    
\end{abstract}
 
\maketitle

\section{Introduction}

\subsection{Motivation and main result}
The KPZ equation, introduced by Kardar, Parisi and Zhang \cite{KPZ86} in their study of fluctuations in growing interfaces subject to random perturbations, is a non-linear stochastic partial differential equation formally written as
\begin{align}\label{KPZ}
\partial_t h(t,x)= \tfrac12\partial_x^2h(t,x) +  \tfrac{1}{2}(\partial_xh(t,x))^2 + \xi(t,x), \quad t>0, x\in \R,
\end{align}
where $\xi$ is a space-time white noise. The Cole-Hopf solution to the KPZ equation \eqref{KPZ} is defined as
\begin{align*}
h(t,x)= \log Z(t,x),
\end{align*}
where $Z$ is the solution to the stochastic heat equation (SHE) 
\begin{align}\label{eq:SHE}
\partial_t Z(t,x)= \tfrac12\partial_x^2Z(t,x)  + Z(t,x)\xi(t,x).
\end{align}
Throughout the paper, we work with the narrow wedge initial data for KPZ, namely, $Z(0,x)=\delta_0(x)$, 
where $\delta_0$ is the Dirac measure at $0$. 

Our goal  is to study the correlation structure of the spatial process $\{h(t,x)\}_{x\in\R}$ for fixed $t>0$. In particular, we are interested in the decay rate of $\Cov[h(t,x),h(t,0)]$ as $x\to\infty$. The question is motivated by the following two observations.

First, for the Airy$_1$ and Airy$_2$ processes, which are the scaling limits of the KPZ equation with flat and narrow wedge initial conditions respectively, the decay rates of the covariance are well understood. Indeed, Indeed,   Pr\"ahofer-Spohn \cite{ps} showed that 
\begin{align}\label{e.decayA2}
\Cov[\A_2(x), \A_2(0)]\sim x^{-2}, \quad \text{as $x\to\infty$},
\end{align}
and Basu et al. \cite{BBF23} have recently derived that
\begin{align}\label{e.decayA1}
\Cov[\A_1(x), \A_1(0)]= \e^{-\frac{4}{3}(1+o(1))x^3}, \quad \text{as $x\to\infty$}.
\end{align}
 Here and throughout, $a\sim b$ means $\tfrac{a}{b}\to1$ in the relevant limit.  Interpreting $\A_1(x)$ and $\A_2(x)$  as describing fluctuations of the free energy of a continuum directed random polymer (CDRP) \cite{CDRP} starting from $x$, with the endpoint either moving freely (Airy$_1$) or fixed at the origin (Airy$_2$), one sees that the difference in decorrelation rates between \eqref{e.decayA2} and \eqref{e.decayA1} arises from path interactions near the terminal point. In the Airy$_2$ case, the two polymer paths—starting from $x$ and $0$—are both conditioned to end at the origin and hence experience a shared random environment near the endpoint, resulting in stronger correlations. Viewing the KPZ equation as a prelimiting model, it is known that, under the  flat initial condition, $\Cov[h(t,x), h(t,0)]$ decays exponentially as $x\to\infty$, as shown via Poincar\'e inequality and the estimate on the Malliavin derivative of the solution to the stochastic heat equation (see Chen et al. \cite{CKNP23} for an upper bound). It is thus natural to ask whether a result analogous to \eqref{e.decayA2} holds for the KPZ equation with narrow wedge initial data, and whether there is a meaningful connection to the behavior described in \eqref{e.decayA2}.

Second, spatial decorrelation plays a central role in the studies of spatial averages of SPDEs and the associated central limit theorem. When the field decorrelates sufficiently fast, one naturally expects a Gaussian limit; in contrast, slowly decorrelating fields may give rise to non-Gaussian behavior. As previously noted, narrow wedge initial data induces strong correlations   through the influence of the noise near the origin. Although   \cite{pufei} established that the spatial averages of the Airy$_1$ and Airy$_2$ processes both exhibit Gaussian fluctuations, it is natural to ask whether similar Gaussian behavior persists for the KPZ equation itself, given the strong correlations induced by the narrow wedge initial condition.

Before presenting the main result, we first note that by a shear invariance of the white noise, it is well-known that $h(t,x)-\log p_t(x)=\log \tfrac{Z(t,x)}{p_t(x)}$ is stationary in the $x-$variable. As a result, for each $t>0$ fixed, $h(t,x)-\EE[ h(t,x)]$ is stationary in $x$. Here is our main result: 
\begin{theorem}\label{th:cov}
       Fix any $t>0$. As $x\to\infty$, 
       \begin{align}\label{e.tx}
        \Cov[h(t,x), h(t,0)]\sim \frac{t}{x}.
       \end{align}
\end{theorem}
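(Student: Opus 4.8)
The plan is to combine Malliavin calculus with the Clark--Ocone formula to turn the covariance into a space--time integral of overlapping polymer densities, and then to localize this integral to short times, where the sharp $t/x$ asymptotics follows from an explicit Gaussian computation.

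First I would record the Malliavin derivative of the solution to \eqref{eq:SHE}. For $s<t$ one has $D_{s,y}Z(t,x)=Z(s,y)\,\cZ(s,y;t,x)$, where $\cZ(s,y;t,x)$ denotes the propagator of \eqref{eq:SHE} from $(s,y)$ to $(t,x)$; consequently
\[
D_{s,y}h(t,x)=\frac{Z(s,y)\,\cZ(s,y;t,x)}{Z(t,x)}=:\rho^x_s(y).
\]
The Chapman--Kolmogorov identity $Z(t,x)=\int_\R Z(s,y)\,\cZ(s,y;t,x)\,\d y$ shows that $y\mapsto\rho^x_s(y)$ is a probability density, namely the quenched density of the time-$s$ location of the continuum polymer from $(0,0)$ to $(t,x)$. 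Writing $u^x_s(y):=\EE[D_{s,y}h(t,x)\mid\F_s]$, the Clark--Ocone representation and the It\^o isometry yield the exact identity
\[
\Cov[h(t,x),h(t,0)]=\int_0^t\!\!\int_\R \EE\big[u^x_s(y)\,u^0_s(y)\big]\,\d y\,\d s,
\]
in which each $u^x_s(\cdot)$ is again a random probability density.

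The guiding picture is that at time $s$ the polymer to $x$ concentrates near $\tfrac stx$ while the polymer to $0$ stays near the origin, so the two densities overlap only for small $s$. I would first show that the contribution of $s\gtrsim t^2/x^2$ and of the large-$|y|$ tails is $o(1/x)$. In the dominant window $s\lesssim t^2/x^2$ the solution of \eqref{eq:SHE} is close to the heat kernel and $u^x_s(\cdot)$ is close to the Brownian-bridge marginal $\cN(\tfrac stx,\tfrac{s(t-s)}{t})$. Replacing the conditional densities by these Gaussians gives, to leading order,
\[
\int_\R \EE\big[u^x_s(y)\,u^0_s(y)\big]\,\d y \;\approx\; \frac{1}{2\sqrt{\pi s}}\,\e^{-s x^2/(4t^2)},
\]
and since $\int_0^\infty \tfrac{1}{2\sqrt{\pi s}}\,\e^{-s x^2/(4t^2)}\,\d s=\tfrac tx$, this reproduces \eqref{e.tx}.

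The main obstacle is to make this Gaussian reduction rigorous and, in particular, to verify that the environment the two polymers share near the origin perturbs only the subleading order. Three points need care. First, the predictable projection $u^x_s(y)=Z(s,y)\,\EE[\cZ(s,y;t,x)/Z(t,x)\mid\F_s]$ contains the random denominator $Z(t,x)=\int_\R Z(s,z)\,\cZ(s,z;t,x)\,\d z$, and controlling this ratio --- uniformly in $x$ and with the correct normalization --- is the delicate step; here I expect to use negative-moment and concentration bounds for $Z$ together with the short-time closeness of the normalizing masses $\int_\R Z(s,z)\,\d z$ and $\int_\R Z(s,z)\,\e^{xz/t}\,\d z$ to their means. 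Second, the shared-environment corrections, governed by the replacement of $\EE[Z(s,y)^2]$ by $p_s(y)^2$ via the explicit two-point (local-time) formula for \eqref{eq:SHE}, should be of relative size $\sqrt s\sim t/x$ in the dominant window, hence contribute only at order $(t/x)^2$; quantifying this is what pins the constant to exactly $t$. Third, I would assemble the tail and error estimates into a dominated-convergence argument, using the stationarity of $h(t,x)-\log p_t(x)$ to keep all bounds uniform in $x$.
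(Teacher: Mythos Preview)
Your outline---Clark--Ocone, It\^o isometry, localization to $s\lesssim t^2/x^2$, Gaussian approximation---is exactly the paper's, and your heuristic integral is what one obtains after the paper's change of variables $s\mapsto s/x^2$, $y\mapsto y/x$.

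The gap is in your first obstacle, which you correctly identify as the crux. The paper's quantitative estimate for $\A(t,x;s,y)-g_t(x,y)$ (Proposition~\ref{p.key}) carries a term $d(xs^{1/4},ys^{1/4})$ that is $O(1)$ rather than small once $x$ is large, so a direct replacement of $\A(t,x;\cdot,\cdot)$ by its small-$s$ limit fails for the far endpoint; this is precisely where your ``negative-moment and concentration bounds'' would have to do real work, and the proposal does not make clear how. The paper's device is a shear-invariance identity (Lemma~\ref{lem:shift}): translating the noise rewrites $\EE\big[\bar{\G}(t,x;s,y)/\bar{\G}(t,x;0,0)\big]$ as an expectation of a ratio anchored at the origin, with all $x$-dependence pushed into a heat-kernel shift inside the denominator. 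After this move the large-$x$ question becomes a small-$s$ question, and the existing H\"older and moment bounds for $\bar{\G}$ finish the job (Lemma~\ref{lem:asymp}). Your tilted mass $\int Z(s,z)\,\e^{xz/t}\,\d z$ hints at the same shear, but the proposal does not commit to it, and without that identity I do not see how to obtain the needed uniformity in $x$.

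Your second obstacle is off the path the paper takes: no explicit two-point local-time formula for $\EE[Z(s,y)^2]$ is used. The factor $\bar{\G}(s,y;0,0)$ inside $\A$ is simply $1+O(s^{1/4})$ in $L^k$ by \eqref{eq:G}, which after rescaling $s\sim x^{-2}$ contributes $O(x^{-1/2})$ and is discarded. The constant $t$ in \eqref{e.tx} is fixed purely by the deterministic Gaussian integral $\int_0^\infty (\pi s t^2)^{-1/2}\e^{-s/(4t^2)}\,\d s=2$, not by any shared-environment correction.
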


The above result shows that the spatial covariance function of the KPZ equation with narrow wedge initial data is \emph{not} integrable. Consequently, the spatial average of the solution cannot be normalized by the standard central limit scaling. In fact, we have the following theorem.
\begin{theorem}\label{th:CLT}
        As $N\to\infty$, 
       \begin{align*}
        \left\{\frac{1}{\sqrt{N\log N}}\int_0^N\left(h(t,x)-\EE[h(t,x)]\right)\d x\right\}_{t\in \R_+}\xrightarrow{\rm fdd} \sqrt{2}{\rm B},
       \end{align*}
       where $\rm B$ denotes the standard Brownian motion, and the symbol $\xrightarrow{\rm fdd}$ refers to convergence of finite-dimensional distributions.
\end{theorem}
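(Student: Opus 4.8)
The plan is to establish the multivariate central limit theorem via the Malliavin--Stein method, which is exactly what is needed for convergence of finite-dimensional distributions (no tightness is required). Write $Y_N(t)=\int_0^N(h(t,x)-\EE[h(t,x)])\,\d x$; the claim is that $(N\log N)^{-1/2}(Y_N(t_1),\dots,Y_N(t_k))$ converges to a centered Gaussian vector with covariance $2(t_i\wedge t_j)_{i,j}$, which is precisely the covariance of $(\sqrt2\,\mathrm{B}(t_1),\dots,\sqrt2\,\mathrm{B}(t_k))$. By Cram\'er--Wold it would suffice to treat a single linear combination, but it is cleaner to keep the vector and invoke the multivariate version of the method. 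The logarithmic correction in the normalization is forced by Theorem~\ref{th:cov}: since $\rho_t(u):=\Cov[h(t,u),h(t,0)]\sim t/u$ is not integrable, the diffusive scaling $\sqrt N$ must be replaced by $\sqrt{N\log N}$.

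The first step is the variance and covariance asymptotics. Using the $x$-stationarity of $h(t,\cdot)-\EE[h(t,\cdot)]$ and Theorem~\ref{th:cov},
\begin{align*}
\Var[Y_N(t)]=\int_0^N\!\!\int_0^N\rho_t(x-y)\,\d x\,\d y=2\int_0^N(N-u)\rho_t(u)\,\d u\sim 2t\,N\log N,
\end{align*}
the leading contribution being $2N\int_0^N\rho_t(u)\,\d u\sim 2tN\log N$. For the cross-covariance with $s\le t$ one cannot invoke stationarity, since $(h(s,\cdot),h(t,\cdot))$ is not jointly stationary in $x$ (the shear centering $h(s,\cdot)$ differs from the one centering $h(t,\cdot)$). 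Instead I would establish a space--time refinement of Theorem~\ref{th:cov}, namely $\Cov[h(s,x),h(t,0)]\sim (s\wedge t)/|x|$ as $|x|\to\infty$, along the same route as the proof of Theorem~\ref{th:cov}; the same computation then gives $\Cov[Y_N(s),Y_N(t)]\sim 2(s\wedge t)\,N\log N$. Equivalently one can read off the $s\wedge t$ structure from the Clark--Ocone representation $Y_N(t)=\int_0^t\!\int_\R\Phi_N(r,z;t)\,\xi(\d r\,\d z)$ with $\Phi_N(r,z;t)=\int_0^N\EE[D_{r,z}h(t,x)\mid\F_r]\,\d x$, for which It\^o's isometry yields $\Cov[Y_N(s),Y_N(t)]=\int_0^{s\wedge t}\!\int_\R\Phi_N(r,z;s)\Phi_N(r,z;t)\,\d z\,\d r$, the time integral truncating at $s\wedge t$; showing this overlap is asymptotic to $\Var[Y_N(s\wedge t)]$ recovers the Brownian covariance.

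With the covariance matrix pinned down, the second step upgrades to joint Gaussianity. Representing each $Y_N(t_i)=\delta(\Phi_N(\cdot;t_i))$ as a Skorokhod/It\^o integral, the multivariate Malliavin--Stein bound controls the distance of $(N\log N)^{-1/2}(Y_N(t_i))_i$ to the target Gaussian by
\begin{align*}
\frac{1}{N\log N}\sum_{i,j}\Big(\big|2(t_i\wedge t_j)\,N\log N-\EE\la DY_N(t_i),\Phi_N(\cdot;t_j)\ra_H\big|+\sqrt{\Var\la DY_N(t_i),\Phi_N(\cdot;t_j)\ra_H}\Big).
\end{align*}
By duality $\EE\la DY_N(t_i),\Phi_N(\cdot;t_j)\ra_H=\Cov[Y_N(t_i),Y_N(t_j)]$, so the first group of terms is exactly the covariance deviation settled in the previous step. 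The remaining task is to show the variance terms are $o((N\log N)^2)$. This rests on moment estimates for the first and second Malliavin derivatives of $\log Z$, using the polymer formula $D_{r,z}\log Z(t,x)=p_{t-r}(x-z)\mathcal{Z}_{r,z}(t,x)Z(r,z)/Z(t,x)$ together with negative-moment bounds on $Z$, so as to control the higher-chaos content of $\la DY_N(t_i),\Phi_N(\cdot;t_j)\ra_H$ and verify it stays below the $(N\log N)^2$ threshold.

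The main obstacle is this last estimate. Because the marginal covariance decays only like $1/|x|$, the variance of $Y_N$ already carries a logarithmic divergence, and the Malliavin--Stein error must be shown to remain genuinely below $(N\log N)^2$ while the analysis faithfully tracks this logarithmic scale; the two-parameter (space--time) derivative bounds for $\log Z$ feeding this estimate are the delicate ingredient. A secondary difficulty is the cross-time asymptotics $\Cov[h(s,x),h(t,0)]\sim (s\wedge t)/|x|$, where the failure of joint stationarity forces a direct analysis rather than a reduction to the stationary case of Theorem~\ref{th:cov}.
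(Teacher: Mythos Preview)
Your Malliavin--Stein plan is a legitimate route, and in fact the paper explicitly records it as an alternative in a remark, noting that with the second-derivative bounds of Kuzgun--Nualart one obtains the quantitative rate $d_{\rm TV}\lesssim\sqrt{\log N/N}$. So your proposal would work, provided you carry out the two items you flag as difficult: the space--time cross-covariance asymptotics and the variance bound on $\langle DY_N(t_i),\Phi_N(\cdot;t_j)\rangle_\H$.

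The paper, however, takes a considerably simpler path that avoids second Malliavin derivatives entirely. Writing the Clark--Ocone representation of $X_N(t)$ and extracting its \emph{first Wiener chaos} $X_{N,1}(t)$ (obtained by replacing the adapted integrand $\A(t,x;s,y)$ by its full expectation), the paper shows directly that $\lim_N\Cov[X_{N,1}(t_1),X_{N,1}(t_2)]=2(t_1\wedge t_2)$ (Proposition~\ref{prop:twotime}). Combined with $\lim_N\EE[X_N(t)^2]=2t$ from Corollary~\ref{cor:cov} (which is essentially your first-step variance computation), orthogonality of chaoses gives $\EE[(X_N(t)-X_{N,1}(t))^2]\to0$. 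Since $X_{N,1}$ lives in the first chaos it is already Gaussian, so Cram\'er--Wold finishes the proof with no Stein bound needed. In effect, the paper proves Gaussianity not by controlling higher-chaos error terms but by showing they vanish at the level of $L^2$ norms, a pure variance matching.

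What each approach buys: your route yields a rate of convergence but requires the full Malliavin--Stein machinery plus second-derivative moment bounds on $\log Z$; the paper's route is shorter and identifies the structural reason for Gaussianity (dominance of the first chaos under the $\sqrt{N\log N}$ scaling), but gives no rate. Your cross-time difficulty---the failure of joint $x$-stationarity of $(h(s,\cdot),h(t,\cdot))$---is real, and the paper sidesteps the pointwise asymptotic $\Cov[h(s,x),h(t,0)]\sim(s\wedge t)/|x|$ by working with $X_{N,1}$ instead: there the integrand is deterministic and the shift identity of Lemma~\ref{lem:shift} reduces $\EE[\B(t_i,x_i;s,y)]$ to a form that can be approximated by $1$ uniformly after suitable changes of variable.
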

The $\log N$ factor in the above normalization, which also appears in the CLT for spatial average of SHE with Dirac initial data (see \cite[Theorem 1.3]{CKNPa}), 
is due to the $\frac1x$-decay rate in Theorem \ref{th:cov}. Moreover, the limiting Gaussian process, $\sqrt{2}{\rm B}$, coincides with that  in \cite[Theorem 1.3]{CKNPa}.
As we will see in the proof of Theorem \ref{th:CLT}, somewhat unexpectedly, the first Wiener chaos  of the spatial average dominates and gives rise to the limiting Gaussian behavior described in Theorem \ref{th:CLT}.

\subsection{Context and discussion}

The study of SHE and KPZ  has seen tremendous progress during the past years, and a comprehensive review of the literature is beyond the scope of this paper. The most physically relevant regime involves  simultaneously rescaling time and space so as to observe nontrivial fluctuations. In this regard, we refer to the reviews \cite{icreview,jqreview,QS15}, the recent developments \cite{landscape,MQR21,Vir20,wx} and the references cited there.

 The focus of this paper lies in a different direction and may be viewed as a continuation of a series of works by the second author and collaborators \cite{CKNPa,CKNP23,CKNP22,CKNP21}. It is also closely connected to a broader body of recent research on spatial averages and central limit theorems for stochastic heat- and wave-type equations; see, for instance, \cite{BNZ21,DNZ20, HNV2018, HNVZ20, NXZ22} and references therein.

There are relatively few tools available for the direct analysis of the KPZ equation. A key ingredient in our analysis is the use of the Clark–Ocone formula, which, in some sense, serves as an analogue of the mild formulation for the SHE. It allows one to express the centered height function as an It\^o integral, where the integrand involves a conditional expectation of the midpoint density of the continuum directed polymer. This reformulation reduces the problem of estimating $\Cov[h(t,x), h(t,0)]$ to analyzing a conditioned version of the overlap between two polymer paths connecting $(t,x)\to (0,0)$ and $(t,0)\to (0,0)$ respectively. In the case of narrow wedge initial data, this analysis turns out to be surprisingly delicate, as the dominant contribution to the covariance  $\Cov[h(t,x), h(t,0)]$ for large $x$ arises from the effect of the noise near the origin.

One may ask if the $\tfrac{1}{x}$ decay obtained in Theorem~\ref{th:cov} is  related in any way to the $\tfrac{1}{x^2}$ decay in  \eqref{e.decayA2}, or if the $\tfrac{t}{x}$ factor on the right-hand side of \eqref{e.tx} reflects the 1:2:3 scaling.  The answer is likely no -- or at least not directly. The two limits of $x\to\infty$ and $t\to\infty$ do not commute: in light of the weak convergence of $2^{1/3}t^{-1/3}(h(t,t^{2/3}\cdot )+\tfrac{t}{24})$ to the $\A_2(\cdot)$ as $t\to\infty$ (see \cite[Theorem 1.7]{wx}),  interchanging the limits $x\to\infty$ and $t\to\infty$ leads to inconsistent conclusion:
\begin{align*}
\frac{2^{2/3}}{t^{1/3}x} 
\stackrel{\infty \leftarrow x}{\sim} 
&\Cov\left[2^{1/3}t^{-1/3}h(t,t^{2/3}x),\, 2^{1/3}t^{-1/3}h(t,0)\right] \\
&\stackrel{t \to \infty}{\longrightarrow} 
\Cov[\mathcal{A}_2(x), \mathcal{A}_2(0)] 
\stackrel{x \to \infty}{\sim} 
\frac{1}{x^2}.
\end{align*}
On the other hand, one could also try to use \eqref{e.4162} to study the decorrelation of the KPZ solution in the joint limit where both time and space tend to infinity, by carefully analyzing the conditioned overlap of two paths from the CDRP. It would be interesting to establish a direct connection between the decorrelation of the Airy process and the geometric overlap of polymer paths.

\bigskip

\subsection*{Organization of the paper.} The rest of paper is organized as follows. In Section~\ref{s.p}, we review the basic properties of SHE, including its Green’s function, Malliavin derivative, and related tools. We prove Theorems~\ref{th:cov} and \ref{th:CLT} in Sections~\ref{s.decaycor} and \ref{s.clt} respectively. Some technical lemmas are left in Section~\ref{s.tech}.

\subsection*{Acknowledgement}
YG was partially supported by the NSF through DMS-2203014.
FP was supported in part by National Key R\&D Program of China (No. 2022YFA 1006500) and Beijing Natural Science Foundation (No. 1232010).

\section{Preliminary}
\label{s.p}

This section sets up the framework for the analysis of SHE. Let $\mathcal{H}= L^2(\R_+ \times \R)$.
The Gaussian family $\{ \xi(\phi)\}_{\phi \in \mathcal{H}}$ formed by the Wiener integrals
\[
	\xi(h)= \int_{\R_+}\int_\R  \phi(s,y)\, \xi(\d s\, \d y)
\]
defines an  {\it isonormal Gaussian process} on the Hilbert space $\mathcal{H}$.
In this framework we apply the Malliavin calculus (see Nualart \cite{Nualart}).
Denote by $D$ the derivative operator.
Let $\{\mathcal{F}_s\}_{s \geq 0}$ denote the filtration generated by
the space-time white noise $\xi$. The solution to the stochastic heat equation \eqref{eq:SHE} with the Dirac initial data satisfies
\begin{align*}
Z(t,x)= p_t(x)+ \int_0^t\int_\R p_{t-s}(x-y)Z(s,y)\,\xi(\d s\, \d y),
\end{align*}
where $p_t(x)=(2\pi t)^{-1/2}\e^{-x^2/(2t)}$. Moreover, the Malliavin derivative of $Z$
satisfies the following stochastic integral equation: for $s\in(0\,,t)$ and $y\in \R$, 
\begin{align*}
	D_{s, y}Z(t,x) = p_{t - s}(x - y)Z(s,y) + \int_s^t\int_\R
	p_{t - r}(x - z)D_{s, y}Z(r,z)\,\xi({\d r\, \d z}),
\end{align*}
(see Chen et al. \cite[Proposition 5.1]{CHN21}). Let
\begin{align}\label{e.defG}
\G(t,x;s,y)= \frac{D_{s,y}Z(t,x)}{Z(s,y)}, \quad \text{for $s<t$ and $x,y\in \R$}.
\end{align}
Then it satisfies
\begin{align*}
\G(t,x;s,y)=p_{t-s}(x-y) + \int_s^t\int_\R
	p_{t - r}(x - z)\G(r,z; s,y)\,\xi({\d r\, \d z}).
\end{align*}
In other words, $\{\G(t,x;s,y):0<s<t, x,y\in \R\}$ solves the family of stochastic partial differential equations
\begin{align*}
\begin{cases}
\partial_t\G(t,x;s,y)=\frac12\partial_x^2\G(t,x;s,y) + \G(t,x;s,y)\xi(t,x),\\
\G(s,x;s,y)=\delta_0(x-y).
\end{cases}
\end{align*}
Equivalently, $\G$ is the Green's function of the SHE; see \cite{chris}. Furthermore, introduce the scaled Green' function
\begin{align*}
\bar{\G}(t,x;s,y)= \frac{\G(t,x;s,y)}{p_{t-s}(x-y)} \quad \text{for $s< t$ and $x,y\in \R$}.
\end{align*}
By the translation invariance of the white noise (see Proposition 2.3 of \cite{chris}), $\bar\G(t,x;s,y)$ has the same distribution as
$\bar\G(t-s;x-y;0,0)$ and for any $T>0$ and $k\in\R$
\begin{align}\label{eq:moments}
\sup_{\substack{0<s<t\leq T\\ x,y\in \R}}\EE[\bar\G(t,x;s,y)^k]<\infty,
\end{align}
(see \cite[Lemma 3.2]{chris}).

The following property of the Green's function of  the SHE will be used later in the proof of Theorems \ref{th:cov} and \ref{th:CLT}. 

\begin{lemma}\label{lem:shift}
      For $s<t$ and $x,y\in \R$,
     \begin{align*}
      \EE\left[\frac{\bar{\G}(t,x;s,y)}{\bar{\G}(t,x;0,0)}\right]=  \EE\left[\frac{\bar{\G}(t,0;s,0)}{\int_\R p_{s(t-s)/t}(z+y-\frac{s}{t}x)\bar{\G}(t,0;s,z)\bar{\G}(s,z+y;0, 0)\d z}\right].
      \end{align*}
\end{lemma}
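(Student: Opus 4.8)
The plan is to reduce the left-hand side to the right-hand side by combining the semigroup (flow) property of the SHE Green's function with the spatial translation invariance of the white noise. First I would rewrite the denominator on the left. By the Chapman--Kolmogorov property of the random propagator $\G$, for the intermediate time $s\in(0,t)$,
\[
\G(t,x;0,0)=\int_\R \G(t,x;s,z)\,\G(s,z;0,0)\,\d z .
\]
Writing every propagator in terms of its scaled version and using the elementary Gaussian convolution identity
\[
p_{t-s}(x-z)\,p_s(z)=p_t(x)\,p_{s(t-s)/t}\!\left(z-\tfrac{s}{t}x\right),
\]
the ratio $\bar\G(t,x;s,y)/\bar\G(t,x;0,0)$ collapses to
\[
\frac{\bar\G(t,x;s,y)}{\displaystyle\int_\R p_{s(t-s)/t}\!\left(z-\tfrac{s}{t}x\right)\bar\G(t,x;s,z)\,\bar\G(s,z;0,0)\,\d z}.
\]
This already has the shape of the right-hand side, with a Gaussian weight of the correct variance $s(t-s)/t$; the remaining task is purely to relocate the spatial anchors.

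The key structural input is that $\{\bar\G(t,x;s,\cdot)\}$ is measurable with respect to the noise on the window $[s,t]$, whereas $\{\bar\G(s,\cdot;0,0)\}$ is measurable with respect to the noise on $[0,s]$; since the white noise has independent increments in time, these two random fields are independent. Moreover each is spatially stationary: by translation invariance of the noise, $\{\bar\G(t,x;s,v)\}_v\overset{d}{=}\{\bar\G(t,x+a;s,v+a)\}_v$ jointly in $v$ for any $a\in\R$, and likewise for the bottom field. I would therefore apply such a spatial shift to the $[s,t]$-field, and, if needed, an independent shift to the $[0,s]$-field, and then perform a change of variables $z\mapsto z+c$ in the defining integral. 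Because the two fields are independent, replacing one of them by an equal-in-law shifted copy inside the expectation is legitimate, and the change of variables transfers the shift onto the Gaussian weight and onto the second argument of the bottom factor, producing the weight $p_{s(t-s)/t}(z+y-\tfrac{s}{t}x)$ and the factor $\bar\G(s,z+y;0,0)$ that appear on the right, while the bottom field is anchored back at the origin.

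The main obstacle is the bookkeeping in this last step: one must choose the shift of the $[s,t]$-field and the change of variables so that, \emph{simultaneously}, (i) the numerator reduces to the canonical propagator on the right, (ii) the top integrand becomes $\bar\G(t,\cdot;s,z)$ with the matching anchor, (iii) the Gaussian weight acquires the displacement $y-\tfrac{s}{t}x$, and (iv) the bottom factor becomes $\bar\G(s,z+y;0,0)$ with its anchor at the origin. Tracking these four constraints together is delicate, since a single shift of the top field moves its anchor and its second argument by the same amount, so the displacements forced by the numerator, by the integrand, and by the bottom field must be reconciled through the change of variables; getting this reconciliation right is exactly what pins down the precise form of the right-hand side. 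Once the shift and change of variables are chosen consistently, passing the identity through the expectation is justified because all relevant moments of $\bar\G$ are finite by \eqref{eq:moments}, which yields the claimed equality.
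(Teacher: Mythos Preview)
Your proposal is correct and follows essentially the same route as the paper: expand $\bar\G(t,x;0,0)$ via the convolution formula, use the heat-kernel identity to produce the weight $p_{s(t-s)/t}$, then exploit independence of the noise on $[0,s]$ and $[s,t]$ together with spatial stationarity of each of the two $\bar\G$-fields, interlacing shifts with changes of variable in the $z$-integral. The paper carries out precisely the sequence of shifts you describe as the ``main obstacle'' (a shift of the $[s,t]$-field by $-x$, a shift of the $[0,s]$-field, a change of variable, a further shift of the $[s,t]$-field to send the numerator to $\bar\G(t,0;s,0)$, and a final change of variable), so your outline is accurate and only the explicit bookkeeping remains.
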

\begin{proof}
       By the convolution formula for the Green's function of SHE (see \cite[Theorem 2.6]{chris}), we write
       \begin{align*}
        \EE\left[\frac{\bar{\G}(t,x;s,y)}{\bar{\G}(t,x;0,0)}\right] &= 
        \EE\left[\frac{p_t(x)\bar{\G}(t,x;s,y)}{\int_\R\G(t,x;s,y')\G(s,y';0, 0)\d y'}\right]\\
        &=       \EE\left[ \EE\left[\frac{p_t(x)\bar{\G}(t,x;s,y)}{\int_\R\G(t,x;s,y')\G(s,y';0, 0)\d y'}|\mathcal{F}_s\right] \right]\\
        &=       \EE_{0,s}\left[ \EE_{s,t}\left[\frac{p_t(x)\bar{\G}(t,x;s,y)}{\int_\R\G(t,x;s,y')\G(s,y';0, 0)\d y'}\right] \right]\\
                &=       \EE_{0,s}\left[ \EE_{s,t}\left[\frac{\bar{\G}(t,x;s,y)}{\int_\R p_{s(t-s)/t}(y'-\frac{s}{t}x)\bar{\G}(t,x;s,y')\bar{\G}(s,y';0, 0)\d y'}\right] \right],
        \end{align*}
               where the notation $\EE_{a,b}$ denotes the expectation with respect to the space-time white noise from time $a$ to $b$, and we used the fact that
               $\bar{\G}(t,x;s,y)$ is independent of $\mathcal{F}_s$ for all $x,y \in \R$.
               The last equality holds by the following identity on heat kernel
               \begin{equation}\label{PPPP}
	\frac{p_{t-s}(a)p_s(b)}{p_t(a+b)} =
	p_{s(t-s)/t}\left( b - \frac st (a+b)\right)
	\quad\text{for all $0<s<t$ and $a,b\in\R$}.
\end{equation}
        Because for all $z\in \R$, the random variable $\frac{\bar{\G}(t,x+z;s,y+z)}{\int_\R p_{s(t-s)/t}(y'-\frac{s}{t}x)\bar{\G}(t,x+z;s,y'+z)\bar{\G}(s,y';0, 0)\d y'}$ has the same distribution as $\frac{\bar{\G}(t,x;s,y)}{\int_\R p_{s(t-s)/t}(y'-\frac{s}{t}x)\bar{\G}(t,x;s,y')\bar{\G}(s,y';0, 0)\d y'}$ under the expectation
        $\EE_{s,t}$ (see \cite[Proposition 2.3]{chris}), 
        we have 
               \begin{align*}
                       \EE\left[\frac{\bar{\G}(t,x;s,y)}{\bar{\G}(t,x;0,0)}\right]
                       &=       \EE_{0,s}\left[ \EE_{s,t}\left[\frac{\bar{\G}(t,0;s,y-x)}{\int_\R p_{s(t-s)/t}(y'-\frac{s}{t}x) \bar{\G}(t,0;s,y'-x)\bar{\G}(s,y';0, 0)\d y'}\right] \right]\\
                &=       \EE_{0,s}\left[ \EE_{s,t}\left[\frac{\bar{\G}(t,0;s,y-x)}{\int_\R p_{s(t-s)/t}(y'-\frac{s}{t}x) \bar{\G}(t,0;s,y'-x)\bar{\G}(s,y'-x;0, -x)\d y'}\right] \right]\\
                       &=       \EE_{0,s}\left[ \EE_{s,t}\left[\frac{\bar{\G}(t,0;s,y-x)}{\int_\R p_{s(t-s)/t}(y'+x-\frac{s}{t}x) \bar{\G}(t,0;s,y')\bar{\G}(s,y';0, -x)\d y'}\right] \right],
                       \end{align*}
    where the second equality holds by the stationarity of the process
    $\{\bar{\G}(s,y'+z;0,z):z\in \R\}$ under the expectation $\EE_{0, s}$.
Using again stationarity,
       \begin{align*}
        \EE\left[\frac{\bar{\G}(t,x;s,y)}{\bar{\G}(t,x;0,0)}\right] 
        &=\EE_{0, s}\left[\EE_{s,t}\left[\frac{\bar{\G}(t,0;s,0)}{\int_\R p_{s(t-s)/t}(y'+x-\frac{s}{t}x)\bar{\G}(t,0;s,y'+x-y)\bar{\G}(s,y';0, -x)\d y'}\right]\right]\\
                &=\EE_{0, s}\left[\EE_{s,t}\left[\frac{\bar{\G}(t,0;s,0)}{\int_\R p_{s(t-s)/t}(z-\frac{s}{t}x)\bar{\G}(t,0;s,z-y)\bar{\G}(s,z;0, 0)\d z}\right]\right]\\
           &=\EE\left[\frac{\bar{\G}(t,0;s,0)}{\int_\R p_{s(t-s)/t}(z-\frac{s}{t}x)\bar{\G}(t,0;s,z-y)\bar{\G}(s,z;0, 0)\d z}\right],
       \end{align*}
       where in the second equality, we use change of variable and stationarity. The proof is complete.
\end{proof}

We conclude this section with a brief overview of the notation used throughout the paper.
For every $Y\in L^k(\Omega)$ with $k\in[1,\infty)$, we write $\|Y\|_k=(\EE[|Y|^k])^{1/k}$.  Throughout we write ``$g_1(x)\lesssim g_2(x)$ for all $x\in X$'' when
there exists a real number $L$ such that $g_1(x)\le Lg_2(x)$ for all $x\in X$.
Alternatively, we might write ``$g_2(x)\gtrsim g_1(x)$ for all $x\in X$.'' By
``$g_1(x)\asymp g_2(x)$ for all $x\in X$'' we mean that $g_1(x)\lesssim g_2(x)$
 and $g_2(x)\lesssim g_1(x)$ for all $x\in X$. 
We also use $\hat{f}$ to denote the Fourier transform of $f$,
normalized so that
\[
	\hat{f}(x) = \int_{-\infty}^\infty
	\e^{ixy}f(y)\,\d y\qquad\text{for all $x\in\R$ and $f\in L^1(\R)$.}
\]
As already used in the proof of Lemma~\ref{lem:shift}, $\EE_{s,t}$ is the expectation on the noise during the time interval $[s,t]$.

\section{Spatial decorrelation: proof of Theorem \ref{th:cov}}
\label{s.decaycor}
In this section, we analyze the asymptotic behavior of the spatial covariance. The starting point is the Clark-Ocone formula (\cite[Proposition 6.3]{CKNP21}): 
    \begin{align*}
     h(t,x)&=\EE[h(t,x)]+ \int_0^t\int_\R \EE[D_{s,y}h(t,x)|\F_s]\,\xi(\d s\, \d y)\\
     &=\EE[h(t,x)]+ \int_0^t\int_\R \EE\left[\frac{D_{s,y}Z(t,x)}{Z(t,x)}|\F_s\right]\,\xi(\d s\, \d y)\\
     &=\EE[h(t,x)]+ \int_0^t\int_\R \A(t,x;s,y)p_{s(t-s)/t}(y-\frac{s}{t}x)\,\xi(\d s\, \d y),
    \end{align*}
    where $\A(t,x;s,y)$ is defined as
    \begin{align}\label{eq:A}
\A(t,x;s,y)=\EE\left[\frac{\bar{\G}(t,x;s,y)\bar\G(s,y;0,0)}{\bar{\G}(t,x;0,0)}|\F_s\right], \quad\quad s<t,x,y\in\R,
\end{align}
and we used \eqref{e.defG} in the calculation of the Malliavin derivative.

By Ito's isometry, the covariance function can be expressed as
    \begin{equation}\label{e.4162}
    \begin{aligned}
    &\Cov[h(t,x),h(t,0)]\\
    &\qquad\qquad= \int_0^{t}\int_\R \EE [\mathcal{X}(t,s,x,y)]p_{s(t-s)/t}(y)p_{s(t-s)/t}(y-\frac{s}{t}x) \d y\d s 
    \end{aligned}
    \end{equation}
    with 
    \[
    \mathcal{X}(t,s,x,y)= \mathcal{A}(t,x;s,y)  \mathcal{A}(t,0;s,y).
    \]
    
    As discussed earlier, the main contribution to the covariance comes from the noise near the origin. From the above expression, this can be seen  as follows: by the moment estimates on $\bar{\G}$ given by \eqref{eq:moments}, one can show that $\mathcal{X}(t,s,x,y)$ is bounded from above and below by positive constants   depending only on $t$. Consequently, the integral in \eqref{e.4162} is of the same order as 
\begin{equation}\label{e.6261}
\begin{aligned}
&\int_0^{t}\int_\R  p_{s(t-s)/t}(y)p_{s(t-s)/t}(y-\frac{s}{t}x) \d y\d s =\int_0^t p_{2s(t-s)/t}(\frac{s}{t}x) \d s\\
\end{aligned}
\end{equation}
For those $s$ away from $0$, the Gaussian kernel $p_{2s(t-s)/t}(\frac{s}{t}x)$ leads to a much faster decay  than $\tfrac{1}{x}$, thus, the main contribution to the above integral comes from those small $s$. Therefore, one needs to understand the behavior of $\chi(t,s,x,y)$ for $s\ll1$. 

As a matter of fact, one can study the integral on the right-hand side of \eqref{e.6261} and derive that as $x\to\infty$ it is of order $x^{-1}$. This gives an upper and lower bound of $\Cov[h(t,x),h(t,0)]$ by $x^{-1}$. To derive the precise decaying rate,  the main technical difficulty in the proof lies in dealing with the extra term $\EE [\mathcal{X}(t,s,x,y)]$ in the integrand of \eqref{e.4162}.

\subsection{A  technical estimate}

From the expression of $\A$ given in \eqref{eq:A}, it is straightforward to guess what the limit of $\mathcal{A}(t,x;s,y)$ is as $s\to0$: we replace the conditional expectation by the full expectation and pass to the limit of each  $\bar{\G}$ factor. This motivates us to define
\begin{align}\label{eq:GT}
g_t(x,y)=\EE\left[\frac{\bar{\G}(t,x;0,y)}{\bar{\G}(t,x;0,0)}\right].
\end{align}
We have the following key technical proposition:
\begin{proposition}\label{p.key}
Fix $t>0$ and $k\geq2$, there exists $C>0$ depending on $t$ and $k$ such that for all $s\in (0,t/4)$, 
\begin{equation}\label{e.errestimate1}
\|\mathcal{A}(t,x;s,y)-g_t(x,y)\|_k\leq C (s^{1/4}+d(xs^{1/4},ys^{1/4})+d(xs^{1/4},0)),
\end{equation}
with $d(x,y):=|x-y|1_{|x-y|<1}+1_{|x-y|\geq 1}$.
\end{proposition}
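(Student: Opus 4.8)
The plan is to realize both $\A$ and $g_t$ as expectations of nearly identical ratios and to split their difference into an \emph{integrand} part and a \emph{filtration} part. Set
\[
R=\frac{\bar\G(t,x;s,y)\,\bar\G(s,y;0,0)}{\bar\G(t,x;0,0)},\qquad
R_0=\frac{\bar\G(t,x;0,y)}{\bar\G(t,x;0,0)},
\]
so that $\A(t,x;s,y)=\EE[R\mid\F_s]$ and, by \eqref{eq:GT}, $g_t(x,y)=\EE[R_0]$. Writing $\EE[R_0]=\EE[\EE[R_0\mid\F_s]]$ gives the exact telescoping
\[
\A-g_t=\EE[R-R_0\mid\F_s]+\bigl(\EE[R_0\mid\F_s]-\EE[R_0]\bigr),
\]
whence, by conditional Jensen, $\|\A-g_t\|_k\le\|R-R_0\|_k+\|\EE[R_0\mid\F_s]-\EE[R_0]\|_k$. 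The first norm measures the cost of sliding the initial time of the numerator from $s$ back to $0$; the second measures the influence on $R_0$ of the noise in the short window $[0,s]$, i.e.\ exactly the effect of the noise near the origin.

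For the integrand part I would peel off the denominator by H\"older,
\[
\|R-R_0\|_k\le\bigl\|\bar\G(t,x;0,0)^{-1}\bigr\|_{2k}\,\bigl\|\bar\G(t,x;s,y)\bar\G(s,y;0,0)-\bar\G(t,x;0,y)\bigr\|_{2k},
\]
and decompose the numerator increment as $\bar\G(t,x;s,y)\bigl(\bar\G(s,y;0,0)-1\bigr)+\bigl(\bar\G(t,x;s,y)-\bar\G(t,x;0,y)\bigr)$. The first summand is controlled by the uniform-in-$y$ estimate $\|\bar\G(s,y;0,0)-1\|_p\lesssim s^{1/4}$ together with the moment bounds \eqref{eq:moments}; the $s^{1/4}$ here originates from a second-moment computation giving $\EE[(\bar\G(s,y;0,0)-1)^2]\asymp\sqrt s$ (with higher moments by Burkholder--Davis--Gundy). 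The second summand is the coupled time-from-$0$ increment of the backward Green's function at fixed terminal point $(t,x)$ and initial location $y$; for $t$ fixed its law depends only on $s$ and $x-y$, and this is the mechanism by which the spatial separation enters and produces the truncated distance $d(xs^{1/4},ys^{1/4})$.

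For the filtration part I would use the Clark--Ocone representation restricted to $[0,s]$,
\[
\EE[R_0\mid\F_s]-\EE[R_0]=\int_0^s\!\!\int_\R \EE[D_{r,z}R_0\mid\F_r]\,\xi(\d r\,\d z),
\]
so that its $L^2$ norm (and, by hypercontractivity, its $L^k$ norm) is governed by $\int_0^s\!\int_\R\EE\bigl[(\EE[D_{r,z}R_0\mid\F_r])^2\bigr]\,\d z\,\d r$. Since $D_{r,z}\bar\G(t,x;0,0)$ concentrates $z$ near the location $\tfrac rt x$ visited at time $r$ by the polymer joining $(t,x)$ to $(0,0)$, the early-time ($r\le s$) contribution is localized near $\tfrac rt x$, and integrating over $r\in(0,s)$ produces the remaining $d(xs^{1/4},0)$ term.

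The hard part is the two analytic inputs that carry the whole argument. First, the sharp \emph{coupled} increment estimate for $\bar\G$ that is uniform in the spatial variables and simultaneously sees both the $s^{1/4}$ gain from moving the initial time off $0$ and the truncated-distance behavior $d(\cdot,\cdot)$ from the endpoint separation; I expect to prove it from the mild SHE equation for $\bar\G$ via a BDG/second-moment analysis in which the Gaussian weight of width $\sqrt s$ localizes the relevant spatial increments. Second, and most delicate, the negative moment bound $\sup_x\|\bar\G(t,x;0,0)^{-1}\|_p<\infty$ needed to tame the ratios $R$ and $R_0$, since positivity of $\bar\G$ by itself gives no integrability of $1/\bar\G$. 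Granting these, one assembles the estimates over $s\in(0,t/4)$, tracks the dependence on $x$ and $y$, and obtains \eqref{e.errestimate1}.
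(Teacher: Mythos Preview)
Your approach is valid and leads to a bound at least as good as \eqref{e.errestimate1}, but it differs from the paper's. The paper does not use Clark--Ocone on $R_0$; instead it writes $\A(t,x;s,y)=\bar\G(s,y;0,0)\,\EE_{s,t}\bigl[\bar\G(t,x;s,y)/\bar\G(t,x;0,0)\bigr]$ and splits the denominator via the convolution formula as $\bar\G(t,x;0,0)=I_1+I_2$, where $I_1=p_t(x)^{-1}\!\int\G(t,x;s,y')p_s(y')\,\d y'$ is the SHE run from initial datum $p_s$ at time $s$ (hence independent of $\F_s$) and $I_2$ carries the factor $\bar\G(s,y';0,0)-1$. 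This makes the conditional expectation explicit and reduces the comparison to three deterministic errors, the additional analytic input being the Hu--L\^e negative-moment bound on $I_1$. Your route trades that input for direct use of $\sup_x\|\bar\G(t,x;0,0)^{-1}\|_p<\infty$, which is already contained in \eqref{eq:moments}, so your ``most delicate'' ingredient is in fact on the shelf. Two corrections to your sketch: (i) hypercontractivity does not apply to $\EE[R_0\mid\F_s]-\EE[R_0]$ since it is not in a fixed chaos; use BDG plus Minkowski on the Clark--Ocone stochastic integral instead, which gives the same $s^{1/4}$. (ii) Your heuristic that the filtration part produces $d(xs^{1/4},0)$ is off: $D_{r,z}R_0$ is bounded in $L^k$ by a sum of two Gaussians in $z$ of variance $\asymp r$, whose squares integrate in $z$ to $O(r^{-1/2})$ regardless of their centers, so integrating over $r\in(0,s)$ yields just $s^{1/4}$ uniformly in $x,y$. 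This only sharpens your conclusion, and \eqref{e.errestimate1} follows a fortiori.
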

\begin{proof}
    We write 
    \[
    \begin{aligned}
    \mathcal{A}(t,x;s,y)
    &=\EE_{s,t}\left[\frac{\bar{\G}(t,x;s,y)}{\bar{\G}(t,x;0,0)}\right]\bar{\G}(s,y;0,0). 
    \end{aligned}
    \]
    For the denominator, using the convolution formula for the Green's function, we rewrite it as 
    \[
    \begin{aligned}
    \bar{\G}(t,x;0,0)&=p_t(x)^{-1}\int_\R \G(t,x;s,y')\G(s,y';0,0)\d y'\\
    &=p_t(x)^{-1}\int_\R \G(t,x;s,y')p_s(y') \bar{\G}(s,y';0,0)\d y'\\
    &=p_t(x)^{-1}\int_\R \G(t,x;s,y')p_s(y')\d y'\\
    &\quad +p_t(x)^{-1} \int_\R \G(t,x;s,y')p_s(y') (\bar{\G}(s,y';0,0)-1)\d y'\\
    &=:I_1(t,s,x)+I_2(t,s,x).
    \end{aligned}
    \]
    So we have
    \begin{equation}\label{e.4171}
    \begin{aligned}
    \EE_{s,t}\left[\frac{\bar{\G}(t,x;s,y)}{\bar{\G}(t,x;0,0)}\right]&=\EE_{s,t}\left[\frac{\bar{\G}(t,x;s,y)}{I_1(t,s,x)+I_2(t,s,x)}\right]\\
    &=\EE_{s,t}\left[\frac{\bar{\G}(t,x;s,y)}{I_1(t,s,x) }\right]+\EE_{s,t}\left[\frac{\bar{\G}(t,x;s,y)(-I_2(t,s,x))}{\bar{\G}(t,x;0,0)I_1(t,s,x)} \right].
    \end{aligned}
    \end{equation}
    Thus, it follows that
\[
\begin{aligned}
&\mathcal{A}(t,x;s,y)-g_t(x,y)\\
&=\left(\EE_{s,t}\left[\frac{\bar{\G}(t,x;s,y)}{I_1(t,s,x) }\right]+\EE_{s,t} \left[\frac{\bar{\G}(t,x;s,y)(-I_2(t,s,x))}{\bar{\G}(t,x;0,0)I_1(t,s,x)}\right]\right) \bar{\G}(s,y;0,0) -g_t(x,y)\\
&=\left(\EE_{s,t}\left[\frac{\bar{\G}(t,x;s,y)}{I_1(t,s,x) }\right]+\EE_{s,t} \left[\frac{\bar{\G}(t,x;s,y)(-I_2(t,s,x))}{\bar{\G}(t,x;0,0)I_1(t,s,x)}\right]\right) [\bar{\G}(s,y;0,0)-1]\\
&\quad+\left(\EE \left[\frac{\bar{\G}(t,x;s,y)}{I_1(t,s,x) }\right]-g_t(x,y)\right)+\EE_{s,t} \left[\frac{\bar{\G}(t,x;s,y)(-I_2(t,s,x))}{\bar{\G}(t,x;0,0)I_1(t,s,x)}\right]\\
&=:err_1+err_2+err_3.
\end{aligned}
\] 

Note that  
\[
\begin{aligned}
\|I_2(t,s,x)\|_k\leq &p_t(x)^{-1}\int_\R \|\G(t,x;s,y')(\bar{\G}(s,y';0,0)-1)\|_k p_s(y') \d y'\\
\leq&  p_t(x)^{-1}\int_\R \|\G(t,x;s,y')\|_{2k}\|\bar{\G}(s,0;0,0)-1\|_{2k} p_s(y')\d y' .
\end{aligned}
\]
We use Burkholder's inequality and follow the same calculation in \cite[Lemma A.4]{CKNPa} to derive that for $T>0$ and $k\geq2$, there exists $C_{T,k}>0$ such that for all $0<s\leq T$,
\begin{align}\label{eq:G}
       \|\bar{\G}(s,0;0,0)-1\|_k \leq C_{T,k}s^{1/4}.
\end{align}
Hence, we can combine with \eqref{eq:moments} to derive that 
\begin{align}\label{eq:I22}
\|I_2(t,s,x)\|_k
\les& p_t(x)^{-1} \int_\R p_{t-s}(x-y') p_s(y')\d y' s^{1/4} =s^{1/4}.
\end{align}
Now we give some estimate on the negative moment of $I_1$.  One realizes that $\int_\R \G(t,x;s,y)p_s(y)dy$ is the solution to SHE started from $p_s(y)$, evaluated at $(t-s,x)$, so by \cite[Corollary 4.9]{hule} we actually have 
\[
\|I_1(t,s,x)^{-1}\|^k_{k} \leq 2^k\e^{2k\sqrt{\lambda(t-s)\log \frac{2}{b(t-s)}}}
\left(1+4\sqrt{\pi k^2\lambda(t-s)}\e^{k^2\lambda(t-s)}\right),
\]
where the two functions $\lambda$ and $b$ are defined in (4.15) and (4.20) of \cite{hule} respectively. It is clear from \cite[Theorem 2.7(ii)]{hule} that both $\lambda$ and $1/b$ are locally bounded. Hence for $T>0$ and $k\geq2$, there exists $C_{T,k}'>0$ such that
\begin{align}\label{eq:I11}
\sup_{\substack{0<s<t\leq T\\ x\in\R}}\|I_1(t,s,x)^{-1}\|_k\leq C'_{T,k}.
\end{align}
By \eqref{eq:moments}, \eqref{eq:I22}, \eqref{eq:I11} and \eqref{eq:G}, we  obtain \[
\begin{aligned}
&\|err_1\|_k \les s^{1/4} \left(1
+s^{1/4}\right)\les s^{1/4},  \\
&\|err_3\|_k \les s^{1/4}.
\end{aligned}
\]

The analysis of $err_2$ is more complicated: we first rewrite it as 
\begin{equation}\label{e.err2}
\begin{aligned}
err_2=&\EE \left[\frac{\bar{\G}(t,x;s,y)-\bar{\G}(t,x;0,y)}{I_1(t,s,x) }\right]\\
&+\EE\left[ \frac{\bar{\G}(t,x;0,y)}{I_1(t,s,x)\bar{\G}(t,x;0,0)}(\bar{\G}(t,x;0,0)-I_1(t,s,x))\right].
\end{aligned}
\end{equation}
So we need to further estimate the error $\bar{\G}(t,x;s,y)-\bar{\G}(t,x;0,y)$ and $I_1(t,s,x)-\bar{\G}(t,x;0,0)$.    
First, by \cite[Lemma 3.6]{chris} we have
\begin{align}\label{eq:GG}
\|\bar{\G}(t,x;s,y)-\bar{\G}(t,x;0,y)\|_k\les |x-y|s^{1/4} 1_{|x-y|s^{1/4} <1}+1_{|x-y|s^{1/4} \geq 1}.
\end{align}
Secondly,  \[
\begin{aligned}
&I_1(t,s,x)-\bar{\G}(t,x;0,0)\\
&=p_t(x)^{-1}\int_\R \G(t,x;s,y')p_s(y')\d y'-p_t(x)^{-1} \G(t,x;0,0)\\
&=p_t(x)^{-1}\int_\R (\G(t,x;s,\sqrt{s/t}y')-\G(t,x;0,0))p_t(y')\d y'.
\end{aligned}
\]
To use the result from \cite{chris}, we rewrite the above term in terms of $\bar{\G}$:
\[
\begin{aligned}
&I_1(t,s,x)-\bar{\G}(t,x;0,0)\\
&=\int_\R (\bar{\G}(t,x;s,\sqrt{s/t}y')\tfrac{p_{t-s}(x-\sqrt{s/t}y')}{p_t(x)}-\bar{\G}(t,x;0,0)) p_t(y')\d y'\\
&=\int_\R \bar{\G}(t,x;s,\sqrt{s/t}y')(\tfrac{p_{t-s}(x-\sqrt{s/t}y')}{p_t(x)}-1) p_t(y')\d y'\\
&\quad +\int_\R (\bar{\G}(t,x;s,\sqrt{s/t}y')-\bar{\G}(t,x;0,0))p_t(y')\d y'=:I_3(t,s,x)+I_4(t,s,x).
\end{aligned}
\]

We  bound  $I_3$ and  $I_4$ as
\[
\begin{aligned}
&\|I_3(t,s,x)\|_k\les \int_\R |\tfrac{p_{t-s}(x-\sqrt{s/t}y)}{p_t(x)}-1| p_t(y)\d y\\
&\|I_4(t,s,x)\|_k \les \int_\R \|\bar{\G}(t,x;s,\sqrt{s/t}y)-\bar{\G}(t,x;0,0)\|_k p_t(y)\d y.
\end{aligned}
\]
\emph{For $I_3$,} after a calculation we have 
\[
|\tfrac{p_{t-s}(x-\sqrt{s/t}y)}{p_t(x)}-1| p_t(y)=\tfrac{1}{\sqrt{2\pi t}} |\tfrac{1}{\sqrt{(t-s)/t}}\e^{-\frac{(\sqrt{t}y-\sqrt{s}x)^2}{2(t-s)t}} -\e^{-\frac{y^2}{2t}}|,
\]
so it remains to estimate 
\[
\begin{aligned}
&\int_\R |\tfrac{1}{\sqrt{(t-s)/t}}\e^{-\frac{(\sqrt{t}y-\sqrt{s}x)^2}{2(t-s)t}} -\e^{-\frac{y^2}{2t}}|\d y \\
&\leq \int_\R |\tfrac{1}{\sqrt{(t-s)/t}}\e^{-\frac{(\sqrt{t}y-\sqrt{s}x)^2}{2(t-s)t}} -\tfrac{1}{\sqrt{(t-s)/t}}\e^{-\frac{y^2}{2(t-s)}}|\d y + \int_\R | \e^{-\frac{y^2}{2t}} -\tfrac{1}{\sqrt{(t-s)/t}}\e^{-\frac{y^2}{2(t-s)}}|\d y.
\end{aligned}
\]
For the second term, since $s$ is close to zero, we have 
$$\int_\R | \e^{-\frac{y^2}{2}} -\tfrac{1}{\sqrt{(t-s)/t}}\e^{-\frac{y^2}{2(t-s)}}|\d y \les s.$$ 
For the first term, we have 
$$\int_\R |\e^{-\frac{(y-\sqrt{s/t}x)^2}{2(t-s)}} -\e^{-\frac{y^2}{2(t-s)}}|\d y \les \sqrt{s}|x|1_{\sqrt{s}|x|\leq 1} +1_{\sqrt{s}|x|>1}.$$ 
This implies that 
\[
\|I_3(t,s,x)\|_k\les s+\sqrt{s}|x|1_{\sqrt{s}|x|\leq 1} +1_{\sqrt{s}|x|>1}.
\]
\emph{For $I_4$}, we have (note that $s\in (0,t/4)$)
\[
\begin{aligned}
&\|\bar{\G}(t,x;s,\sqrt{s/t}y)-\bar{\G}(t,x;0,0)\|_k \\
&\leq \|\bar{\G}(t,x;s,\sqrt{s/t}y)-\bar{\G}(t,x;s,0)\|_k +\|\bar{\G}(1,x;s,0)-\bar{\G}(1,x;0,0)\|_k\\
& \les |\sqrt{s}y|^{1/2}+ (|x|s^{1/4}1_{|x|s^{1/4}<1}+1_{|x|s^{1/4}\geq1}),
\end{aligned}
\]
in light of \cite[Lemmas 3.4 and 3.6]{chris}, which implies that 
\[
\begin{aligned}
\|I_4(t,s,x)\|_k &\les \int_\R ( |\sqrt{s}y|^{1/2}+ (|x|s^{1/4}1_{|x|s^{1/4}<1}+1_{|x|s^{1/4}\geq1})) p_t(y)\d y\\
&\les s^{1/4}+|x|s^{1/4}1_{|x|s^{1/4}<1}+1_{|x|s^{1/4}\geq1}.
\end{aligned}
\]

To summarize, we have 
\begin{align}\label{eq:I1G}
\|I_1(t,s,x)-\bar{\G}(1,x;0,0)\|_k &\les  s+\sqrt{s}|x|1_{\sqrt{s}|x|\leq 1} +1_{\sqrt{s}|x|>1} \nonumber\\
&+ s^{1/4}+|x|s^{1/4}1_{|x|s^{1/4}<1}+1_{|x|s^{1/4}\geq1}.
\end{align}
One can simplify the right-hand side so that  
\[
\|I_1(t,s,x)-\bar{\G}(1,x;0,0)\|_k \les s^{1/4}+|x|s^{1/4}1_{|x|s^{1/4}<1}+1_{|x|s^{1/4}\geq1}.
\]
Combining  \eqref{eq:GG}, \eqref{eq:I11}, \eqref{eq:I1G}, \eqref{eq:moments} with \eqref{e.err2}, we obtain 
\[
\begin{aligned}
\|err_2\|_k \les  (& |x-y|s^{1/4} 1_{|x-y|s^{1/4} <1}+1_{|x-y|s^{1/4} \geq 1}\\
&+   s^{1/4}+|x|s^{1/4}1_{|x|s^{1/4}<1}+1_{|x|s^{1/4}\geq1}).
\end{aligned}
\]
Therefore, it holds that
\[
\begin{aligned}
&\|\mathcal{A}(t,x;s,y)-g_t(x,y)\|_k\\
&\les( |x-y|s^{1/4} 1_{|x-y|s^{1/4} <1}+1_{|x-y|s^{1/4} \geq 1}+   s^{1/4}+|x|s^{1/4}1_{|x|s^{1/4}<1}+1_{|x|s^{1/4}\geq1}),
\end{aligned}
\]
which completes the proof.
\end{proof}

  \subsection{Spatial decay} 
  
 Recall that our goal is to determine the decaying rate of 
 \begin{equation}\label{e.6262}
    \begin{aligned}
    &\Cov[h(t,x),h(t,0)]\\
    &\qquad\qquad= \int_0^{t}\int_\R \EE [\mathcal{X}(t,s,x,y)]p_{s(t-s)/t}(y)p_{s(t-s)/t}(y-\frac{s}{t}x) \d y\d s. 
    \end{aligned}     \end{equation}  With Proposition~\ref{p.key} established in the previous section, one might expect that it is straightforward to approximate
  \[
  \mathcal{X}(t,s,x,y)= \mathcal{A}(t,x;s,y)  \mathcal{A}(t,0;s,y)\approx  g_t(x,y)g_t(0,y), \quad\quad \mbox{ for } s\ll1,
  \]  
    and then analyze the resulting integral. However, the situation turns out to be more delicate than it first appears.

    Using the following identities on the heat kernel
    \begin{align}
     p_t(x)p_t(y)=2p_{2t}(x+y)p_{2t}(x-y), \quad p_t(\alpha x)= \alpha^{-1}p_{t/\alpha^2}(x), 
    \end{align}
   for $x,y\in \R$ and $t, \alpha>0$,
we rewrite the product of the heat kernel in \eqref{e.6262} as 
\[
p_{s(t-s)/t}(y)p_{s(t-s)/t}(y-\tfrac{s}{t}x) =\tfrac12p_{s(t-s)/(2t)}(y-\tfrac{sx}{2t})p_{s(t-s)/(2t)}(\tfrac{sx}{2t}) ,
\]
so that the covariance can be rewritten as 
\begin{align*}
&\Cov[h(t,x),h(t,0)]\\
&\qquad\qquad=\tfrac12\int_0^t\int_\R \EE [\mathcal{X}(t,s,x,y)]p_{s(t-s)/(2t)}(y-\tfrac{sx}{2t})p_{s(t-s)/(2t)}(\tfrac{sx}{2t})\d y\d s.
\end{align*}

Assuming without loss of generality $x>0$, we do a change of variable 
\[
s\mapsto s/x^2, \quad\quad y\mapsto y/x,
\] so that the covariance can be rewritten as
\begin{align}\label{eq:covh}
&\Cov[h(t,x),h(t,0)] \nonumber\\
&\quad =\frac{t}{2x}\int_0^{tx^2}\int_\R\EE\left[ \mathcal{X}(t,\tfrac{s}{x^2},x,\tfrac{y}{x})\right]\frac{1}{\pi s(t-sx^{-2})}\e^{-\frac{t(y-s/(2t))^2}{s(t-sx^{-2})}}\e^{-\frac{s}{4t(t-sx^{-2})}} \d y\d s . 
\end{align}
The rest of this section is devoted to studying the integral on the right-hand side of \eqref{eq:covh} as we send $x\to\infty$. Throughout the section, $t>0$ is fixed (so the implicit multiplicative constant in all estimates may depend on $t$). 

\bigskip
We begin with the following lemma, which shows that it is sufficient to consider only small values of $s$.
\begin{lemma}\label{lem:appro1}
    Fix $\beta \in (0, 1/2)$. Then the integral
    \[
    \int_{x^\beta}^{tx^2}\int_\R\EE\left[ \mathcal{X}(t,\tfrac{s}{x^2},x,\tfrac{y}{x})\right]\frac{1}{s(t-sx^{-2})}
    \e^{-\frac{t(y-s/(2t))^2}{s(t-sx^{-2})}}\e^{-\frac{s}{4t(t-sx^{-2})}} \d y\d s
    \]
    tends to $0$ as $x\to\infty$.
\end{lemma}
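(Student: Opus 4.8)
The plan is to discard the random factor $\EE[\mathcal{X}]$ using a crude uniform bound and then show that the remaining \emph{deterministic} integral over the tail $s\in(x^\beta,tx^2)$ decays super-polynomially in $x$, so much faster than the $\tfrac{t}{2x}$ prefactor of \eqref{eq:covh} that the tail is negligible. First I would invoke the uniform bound $\EE[\mathcal{X}(t,s,x,y)]\le C_t$ for all $0<s<t$ and $x,y\in\R$, observed in the discussion following \eqref{e.4162}: by Cauchy--Schwarz, $\EE[\mathcal{X}(t,s,x,y)]\le \|\mathcal{A}(t,x;s,y)\|_2\,\|\mathcal{A}(t,0;s,y)\|_2$, and each factor is controlled by expanding the denominator as in \eqref{e.4171} and combining \eqref{eq:moments} with the negative-moment bound \eqref{eq:I11} on $I_1$. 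Since the whole integrand is nonnegative, I may replace $\EE[\mathcal{X}(t,\tfrac{s}{x^2},x,\tfrac{y}{x})]$ by the constant $C_t$.

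Next I would integrate out the $y$-variable. The Gaussian factor contributes
\[
\int_\R \e^{-\frac{t(y-s/(2t))^2}{s(t-sx^{-2})}}\,\d y=\sqrt{\tfrac{\pi s(t-sx^{-2})}{t}},
\]
which cancels exactly one of the two powers of $s(t-sx^{-2})$ in the prefactor. Hence the integral in the lemma is at most a $t$-dependent constant times
\[
J(x):=\int_{x^\beta}^{tx^2}\frac{1}{\sqrt{s(t-sx^{-2})}}\,\e^{-\frac{s}{4t(t-sx^{-2})}}\,\d s .
\]

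Then I would substitute $v=sx^{-2}$ (so $v$ ranges over $(x^{\beta-2},t)$), giving
\[
J(x)=x\int_{x^{\beta-2}}^{t}\frac{1}{\sqrt{v(t-v)}}\,\e^{-\frac{vx^2}{4t(t-v)}}\,\d v,
\]
and split the integral at $v=t/2$. On $(x^{\beta-2},t/2)$ I bound $t-v\le t$, so the exponent is $\ge vx^2/(4t^2)\ge x^\beta/(4t^2)$; the change of variables $w=vx^2/(4t^2)$ absorbs the prefactor $x$ and leaves $\lesssim\int_{x^\beta/(4t^2)}^\infty w^{-1/2}\e^{-w}\,\d w\lesssim \e^{-x^\beta/(4t^2)}$. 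On $(t/2,t)$ I use $v\ge t/2$ to get exponent $\ge x^2/\big(8(t-v)\big)$ and $(v(t-v))^{-1/2}\lesssim (t-v)^{-1/2}$; with $r=t-v$ this yields $\lesssim x\int_0^{t/2} r^{-1/2}\e^{-x^2/(8r)}\,\d r\lesssim x\,\e^{-x^2/(4t)}$. Both pieces tend to $0$, hence $J(x)\to0$ and the lemma follows.

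The only delicate point, and the step I expect to watch most carefully, is the singularity of the prefactor $\frac{1}{s(t-sx^{-2})}$ as $s\uparrow tx^2$, i.e.\ $v\uparrow t$. After the $y$-integration this is softened to the integrable $(t-v)^{-1/2}$, and precisely there the exponent $\frac{vx^2}{4t(t-v)}$ is of order $x^2$, so the Gaussian decay dominates it with room to spare. The genuinely binding constraint is the lower cutoff $s\ge x^\beta$: it forces the exponent to be $\gtrsim x^\beta$ throughout the bulk of the range, which is exactly what lets $J(x)$ overcome the explicit factor of $x$ produced by the rescaling.
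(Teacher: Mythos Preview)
Your proof is correct and follows essentially the same route as the paper: bound $\EE[\mathcal{X}]$ uniformly, integrate out $y$ to reduce to a one-dimensional integral, split at $s=tx^2/2$ (equivalently $v=t/2$), and show each piece decays exponentially in $x^\beta$ or $x^2$. The only cosmetic difference is that you perform the change of variables $v=s/x^2$ before splitting, whereas the paper splits in the $s$-variable directly and handles the near-endpoint piece via the inequality $\sqrt{y}\e^{-y/4}\lesssim \e^{-y/8}$; also, for the uniform bound on $\EE[\mathcal{X}]$ you can appeal to \eqref{eq:moments} alone (which covers negative moments of $\bar\G$) without going through \eqref{e.4171}--\eqref{eq:I11}.
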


\begin{proof}
  Denote the integral by $I$.  Since the expectation inside the above integral is uniformly bounded, we bound it as    \[
    \begin{aligned}
     &I\les \int_{x^\beta}^{tx^2}\int_\R  \frac{1}{s(t-sx^{-2})}
    \e^{-\frac{t(y-s/(2t))^2}{s(t-sx^{-2})}}\e^{-\frac{s}{4t(t-sx^{-2})}} \d y\d s \\
   & \asymp \int_{x^\beta}^{tx^2}\frac{1}{\sqrt{s(t-sx^{-2})}}\e^{-\frac{s}{4t(t-sx^{-2})}} \d s.
    \end{aligned}
    \]
    Decompose the domain of integration into two parts, we have
    \[
 I\les     \int_{x^\beta}^{tx^2/2}\frac{1}{\sqrt{ s }}\e^{-\frac{s}{4t(t-x^{\beta-2})}}\d s+\int_{tx^2/2}^{tx^2} \frac{1}{s} \sqrt{\frac{s}{t(t-sx^{-2})}}\e^{-\frac{s}{4t(t-sx^{-2})}} \d s
    \]
    For the last integral, by the inequality $\sqrt{y}\e^{-y/4}\les \e^{-y/8}$, we complete the proof.
\end{proof}

Next, we need to figure out the limit of 
\[
\EE\left[ \mathcal{X}(t,\tfrac{s}{x^2},x,\tfrac{y}{x})\right]
\]
as $x\to\infty$. Recall \[
 \mathcal{X}(t,\tfrac{s}{x^2},x,\tfrac{y}{x})=\mathcal{A}(t,x;\tfrac{s}{x^2},\tfrac{y}{x}) \mathcal{A}(t,0;\tfrac{s}{x^2},\tfrac{y}{x}),
\]
There are two terms on the right-hand side. We deal with the second one first: Proposition~\ref{p.key} immediately implies that (as $s/x^2,y/x\to0$) 
\[
 \mathcal{A}(t,0;\tfrac{s}{x^2},\tfrac{y}{x}) \to g_t(0,0)=1
\]
in $L^k(\Omega)$. This leads to the following lemma: 
\begin{lemma}\label{lem:appro2}
    Fix $\beta\in (0, 1/2) $. We have 
    \[
    \begin{aligned}
    \int_0^{x^\beta}\int_\R& \left(\EE\left[ \mathcal{X}(t,\tfrac{s}{x^2},x,\tfrac{y}{x})\right]-g_t(0,0)\EE\left[ \mathcal{A}(t,x;\tfrac{s}{x^2},\tfrac{y}{x})\right]\right)\\
    &\qquad\qquad\qquad\qquad \times  \frac{1}{s(t-sx^{-2})}
    \e^{-\frac{t(y-s/(2t))^2}{s(t-sx^{-2})}}\e^{-\frac{s}{4t(t-sx^{-2})}} \d y\d s \to 0
    \end{aligned}
    \]
    as $x\to\infty$.
\end{lemma}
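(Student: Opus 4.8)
The plan is to use the identity $g_t(0,0)=1$ to collapse the bracketed difference into a single expectation, bound it by Cauchy--Schwarz, and then integrate the resulting pointwise estimate against the Gaussian kernel. Since $\mathcal{X}(t,\tfrac{s}{x^2},x,\tfrac{y}{x})=\mathcal{A}(t,x;\tfrac{s}{x^2},\tfrac{y}{x})\,\mathcal{A}(t,0;\tfrac{s}{x^2},\tfrac{y}{x})$ and $g_t(0,0)=1$, the prefactor in the integrand is exactly
\[
\EE\!\left[\mathcal{A}(t,x;\tfrac{s}{x^2},\tfrac{y}{x})\bigl(\mathcal{A}(t,0;\tfrac{s}{x^2},\tfrac{y}{x})-g_t(0,0)\bigr)\right],
\]
whose absolute value is at most $\|\mathcal{A}(t,x;\tfrac{s}{x^2},\tfrac{y}{x})\|_2\,\|\mathcal{A}(t,0;\tfrac{s}{x^2},\tfrac{y}{x})-g_t(0,0)\|_2$. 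From the defining formula \eqref{eq:A} and the $L^2$-contractivity of conditional expectation, the first factor is at most $\|\bar{\G}(t,x;\tfrac{s}{x^2},\tfrac{y}{x})\|_6\|\bar{\G}(\tfrac{s}{x^2},\tfrac{y}{x};0,0)\|_6\|\bar{\G}(t,x;0,0)^{-1}\|_6$, which is bounded uniformly in $x,y,s$ by \eqref{eq:moments} (used for both positive and negative exponents). It therefore suffices to show that the product of the second factor with the kernel integrates to $0$.

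To estimate $\|\mathcal{A}(t,0;\tfrac{s}{x^2},\tfrac{y}{x})-g_t(0,0)\|_2$ I insert $g_t(0,\tfrac{y}{x})$ and use the triangle inequality. For the first piece I apply Proposition~\ref{p.key} with spatial arguments $0$ and $\tfrac{y}{x}$ and time $\tfrac{s}{x^2}$ (admissible once $x$ is large enough that $\tfrac{s}{x^2}<t/4$ throughout $s\in(0,x^\beta)$), obtaining $\|\mathcal{A}(t,0;\tfrac{s}{x^2},\tfrac{y}{x})-g_t(0,\tfrac{y}{x})\|_2\lesssim \tfrac{s^{1/4}}{x^{1/2}}+d\bigl(0,\tfrac{ys^{1/4}}{x^{3/2}}\bigr)$; here I will exploit the elementary bound $d(0,b)=\min(|b|,1)\le|b|^{1/2}$ to replace the indicator term by $\tfrac{|y|^{1/2}s^{1/8}}{x^{3/4}}$, which is far easier to integrate. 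For the deterministic remainder I write $g_t(0,\tfrac{y}{x})-g_t(0,0)=\EE\bigl[(\bar{\G}(t,0;0,\tfrac{y}{x})-\bar{\G}(t,0;0,0))/\bar{\G}(t,0;0,0)\bigr]$ and bound it by Cauchy--Schwarz using the spatial Hölder regularity $\|\bar{\G}(t,0;0,w)-\bar{\G}(t,0;0,0)\|_2\lesssim|w|^{1/2}$ from \cite[Lemma 3.4]{chris} together with the finite negative moment $\|\bar{\G}(t,0;0,0)^{-1}\|_2$ from \eqref{eq:moments}; this yields $|g_t(0,\tfrac{y}{x})-g_t(0,0)|\lesssim|y/x|^{1/2}$. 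Altogether
\[
\bigl\|\mathcal{A}(t,0;\tfrac{s}{x^2},\tfrac{y}{x})-g_t(0,0)\bigr\|_2\lesssim \frac{s^{1/4}}{x^{1/2}}+\frac{|y|^{1/2}s^{1/8}}{x^{3/4}}+\frac{|y|^{1/2}}{x^{1/2}}.
\]

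It remains to integrate this bound against the kernel over $s\in(0,x^\beta)$, $y\in\R$. Since $\beta<1/2$, on this range $sx^{-2}\to0$ and $t-sx^{-2}\asymp t$, so the $y$-factor is a Gaussian of variance $\asymp s$ and the last exponential supplies a decay $\e^{-cs}$ with $c=c(t)>0$. Performing the $y$-integration gives $\int_\R K\,\d y\asymp s^{-1/2}\e^{-cs}$ for the term without $|y|^{1/2}$ and $\int_\R|y|^{1/2}K\,\d y\lesssim(1+s^{-1/4})\e^{-cs}$ for the two terms carrying $|y|^{1/2}$. Substituting these and using that $\int_0^\infty s^{a}\e^{-cs}\,\d s<\infty$ for every $a>-1$, the three contributions are bounded by constant multiples of $x^{-1/2}$, $x^{-3/4}$ and $x^{-1/2}$ respectively, all of which vanish as $x\to\infty$. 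This proves the lemma.

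The step demanding the most care is this last passage from the pointwise estimate to the vanishing integral: one must ensure the bound is small \emph{uniformly} enough across the whole domain---both in the Gaussian tails in $y$ and as $s\downarrow0$---so that the mild $s^{-1/2}$ (resp.\ $s^{-1/4}$) factor produced by the $y$-integration stays integrable at the origin and is not amplified. The inequality $d(0,b)\le|b|^{1/2}$ is exactly what makes this routine, trading the indicator coming from Proposition~\ref{p.key} for a genuine power of $|y|$ that pairs cleanly with the Gaussian weight and removes any need for a separate large-deviation estimate on the event $\{|y|s^{1/4}\ge x^{3/2}\}$.
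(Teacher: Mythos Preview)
Your proof is correct and follows essentially the same route as the paper: Cauchy--Schwarz to isolate $\|\mathcal{A}(t,0;\tfrac{s}{x^2},\tfrac{y}{x})-g_t(0,0)\|_2$, then Proposition~\ref{p.key} plus the spatial H\"older bound for $\bar\G$ to control this factor, and finally integration against the Gaussian kernel. The only cosmetic differences are that you bound $d(0,b)\le|b|^{1/2}$ rather than $d(0,b)\le|b|$, and you exploit the $\e^{-cs}$ decay to extend the $s$-integral to infinity (yielding clean $x^{-1/2}$-type rates) whereas the paper simply bounds the exponential by $1$ and integrates powers of $s$ over $(0,x^\beta)$.
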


\begin{proof}
 Using Cauchy-Schwarz inequality and \eqref{eq:moments}, we have 
\begin{align*}
&\left|\EE\left[ \mathcal{X}(t,\tfrac{s}{x^2},x,\tfrac{y}{x})\right]-g_t(0,0)\EE\left[ \mathcal{A}(t,x;\tfrac{s}{x^2},\tfrac{y}{x})\right]\right|\\ 
&\qquad\qquad\les \| \mathcal{A}(t,0;\tfrac{s}{x^2},\tfrac{y}{x})-g_t(0,0)\|_2\\
&\qquad\qquad\les \| \mathcal{A}(t,0;\tfrac{s}{x^2},\tfrac{y}{x})-g_t(0,\tfrac{y}{x})\|_2+  \|g_t(0,\tfrac{y}{x})-g_t(0,0)\|_2.
\end{align*}
 By Proposition~\ref{p.key}, the right-hand side can be bounded as 
    \begin{equation}\label{e.bd33}
    (sx^{-2})^{1/4}+|y/x|(sx^{-2})^{1/4}+|y/x|^{1/2},
    \end{equation}
    where the last term $|y/x|^{1/2}$ comes from the fact that  
    \begin{align}\label{eq:holder2}
    \|\bar{\G}(t,0;0,0)-\bar{\G}(t,0;0,\tfrac{y}{x})\|_k\les |y/x|^{1/2},
    \end{align}
    (see \cite[Lemma 3.4]{chris}).
    
    Recall that $\beta\in(0,\tfrac12)$. We substitute \eqref{e.bd33} into the integral and handle the three resulting terms separately.
      
  (i) 
    \begin{align}\label{eq:appro1}
    &\int_0^{x^\beta}\int_\R  (sx^{-2})^{1/4} \frac{1}{s(t-sx^{-2})}
    \e^{-\frac{t(y-s/(2t))^2}{s(t-sx^{-2})}}\e^{-\frac{s}{4t(t-sx^{-2})}} \d y\d s\nonumber\\
    & \qquad\qquad \les x^{-1/2}\int_0^{x^\beta}s^{-1/4}\d s \asymp x^{\frac{3\beta}{4}-\frac12}\to 0 \quad \text{as $x\to\infty$}
    \end{align}

    (ii)    \begin{align*}
    &\int_0^{x^\beta}\int_\R  (sx^{-2})^{1/4}|y/x| \frac{1}{s(t-sx^{-2})}
    \e^{-\frac{t(y-s/(2t))^2}{s(t-sx^{-2})}}\e^{-\frac{s}{4t(t-sx^{-2})}} \d y\d s\\
    & \qquad\qquad \les x^{-1/2}\int_0^{x^\beta}s^{-1/4} \frac{\sqrt{s}+s}{|x|}\d s \leq x^{\frac{7\beta}{4}-\frac32}\to 0 \quad \text{as $x\to\infty$}.
    \end{align*}

  (iii)
    \begin{align}\label{eq:appro3}
    &\int_0^{x^\beta}\int_\R  |y/x|^{1/2} \frac{1}{s(t-sx^{-2})}
    \e^{-\frac{t(y-s/(2t))^2}{s(t-sx^{-2})}}\e^{-\frac{s}{4t(t-sx^{-2})}} \d y\d s\nonumber\\
    & \qquad\qquad \les x^{-1/2}\int_0^{x^\beta}\frac{s^{1/4}+s^{1/2}}{\sqrt{s}}\d s \leq x^{\beta-\frac12}\to 0 \quad \text{as $x\to\infty$}.
    \end{align}
     Hence, we complete the proof.
\end{proof}

To analyze $\mathcal{A}(t,x;\tfrac{s}{x^2},\tfrac{y}{x})$ as $x\to\infty$,  we need the following lemma. 
\begin{lemma}\label{lem:asymp}
          For $t>s>0$,
          \begin{align*}
          \lim_{x\to\infty}\EE \left[\frac{\bar{\G}(t,x;\tfrac{s}{x^2},0)}{\bar{\G}(t,x;0,0)}\right]=1.
          \end{align*}
\end{lemma}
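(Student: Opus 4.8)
The plan is to turn the expectation into a single ratio via the shift identity of Lemma~\ref{lem:shift} and then control numerator and denominator separately. Applying Lemma~\ref{lem:shift} with source time $\sigma:=s/x^2$ and $y=0$, and abbreviating $\tau:=\sigma(t-\sigma)/t$ and $a:=\tfrac{\sigma}{t}x=\tfrac{s}{xt}$, I would rewrite
\[
\EE\left[\frac{\bar{\G}(t,x;\sigma,0)}{\bar{\G}(t,x;0,0)}\right]
=\EE\left[\frac{N_x}{D_x}\right],\qquad
N_x:=\bar{\G}(t,0;\sigma,0),\quad
D_x:=\int_\R p_\tau(z-a)\,\bar{\G}(t,0;\sigma,z)\,\bar{\G}(\sigma,z;0,0)\,\d z .
\]
Since $t>s>0$ is fixed, the three parameters $\sigma,\tau,a$ all tend to $0$ as $x\to\infty$. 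The heuristic is that $p_\tau(\cdot-a)$ is a probability density concentrating at the origin, so both $N_x$ and $D_x$ converge to $\bar{\G}(t,0;0,0)$ (a positive random variable), which forces the ratio to $1$.

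For the difference of numerator and denominator I would use $\int_\R p_\tau(z-a)\,\d z=1$ to write
\[
N_x-D_x=\int_\R p_\tau(z-a)\big[\bar{\G}(t,0;\sigma,0)-\bar{\G}(t,0;\sigma,z)\bar{\G}(\sigma,z;0,0)\big]\,\d z ,
\]
and split the bracket as $[\bar{\G}(t,0;\sigma,0)-\bar{\G}(t,0;\sigma,z)]+\bar{\G}(t,0;\sigma,z)[1-\bar{\G}(\sigma,z;0,0)]$. The first term is $O(|z|^{1/2})$ in every $L^k$ by the spatial H\"older continuity of the Green's function (\cite[Lemma 3.4]{chris}); for the second, \eqref{eq:moments} controls $\bar{\G}(t,0;\sigma,z)$ while $\|1-\bar{\G}(\sigma,z;0,0)\|_k\les|z|^{1/2}+\sigma^{1/4}$ by combining \eqref{eq:G} with \cite[Lemma 3.4]{chris}. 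Taking $L^2$ norms inside the integral (Minkowski) and integrating the resulting $|z|^{1/2}+\sigma^{1/4}$ against $p_\tau(\cdot-a)$ gives $\|N_x-D_x\|_2\les(|a|+\sqrt{\tau})^{1/2}+\sigma^{1/4}\to0$.

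The hard part is the lower bound on the denominator, namely a bound on $\|D_x^{-1}\|_k$ that is uniform in $x$; this is genuinely the obstacle, since it cannot be extracted from mere $L^2$-closeness of $D_x$ to $\bar{\G}(t,0;0,0)$. The key device is that $p_\tau(\cdot-a)\,\d z$ is a probability measure, so Jensen's inequality applied to the convex function $u\mapsto u^{-k}$ on $(0,\infty)$ (using $\bar{\G}>0$ a.s.) yields
\[
D_x^{-k}\le\int_\R p_\tau(z-a)\big[\bar{\G}(t,0;\sigma,z)\bar{\G}(\sigma,z;0,0)\big]^{-k}\,\d z .
\]
Taking expectations, then Cauchy--Schwarz in $\omega$ and the moment bounds \eqref{eq:moments}---which hold for \emph{every} real exponent and uniformly over $z\in\R$ and $\sigma\in(0,t)$---the integrand's expectation is bounded by a constant independent of $z$ and $x$; integrating against $p_\tau(\cdot-a)$ then gives $\sup_x\|D_x^{-1}\|_k<\infty$ for all large $x$.

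Finally I would combine the two estimates. Writing $\EE[N_x/D_x]-1=\EE[(N_x-D_x)/D_x]$ and applying Cauchy--Schwarz,
\[
\left|\EE\left[\frac{N_x}{D_x}\right]-1\right|\le\|N_x-D_x\|_2\,\|D_x^{-1}\|_2\les\big((|a|+\sqrt{\tau})^{1/2}+\sigma^{1/4}\big)\xrightarrow[x\to\infty]{}0 ,
\]
which is exactly the claim. The only genuinely delicate point is the uniform negative-moment control of $D_x$, and Jensen's inequality together with the negative-exponent case of \eqref{eq:moments} resolves it cleanly.
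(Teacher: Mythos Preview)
Your proposal is correct and follows essentially the same route as the paper: apply Lemma~\ref{lem:shift} to write the expectation as $\EE[N_x/D_x]$, use Jensen's inequality together with the negative-moment case of \eqref{eq:moments} to control $\|D_x^{-1}\|_k$ uniformly, and use the spatial/temporal H\"older estimates from \cite{chris} and \eqref{eq:G} to show the numerator and denominator become close. The only cosmetic difference is that the paper compares $N_x$ and $D_x$ separately to the common limit $\bar{\G}(t,0;0,0)$, whereas you bound $N_x-D_x$ directly; the ingredients and the logic are the same.
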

\begin{proof}
          According to Lemma \ref{lem:shift},
          \begin{align}\label{eq:denominator}
          \EE \left[\frac{\bar{\G}(t,x;\tfrac{s}{x^2},0)}{\bar{\G}(t,x;0,0)}\right] = \EE\left[\frac{\bar{\G}(t,0;\frac{s}{x^2},0)}{\int_\R p_{sx^{-2}(t-sx^{-2})/t}(z-\frac{s}{tx})\bar{\G}(t,0;\frac{s}{x^2},z)\bar{\G}(\frac{s}{x^2},z;0, 0)\d z}\right].
          \end{align}
          For the numerator, as $x\to\infty$,
          \begin{align*}
          \bar{\G}(t,0;\frac{s}{x^2},0) \to \bar{\G}(t,0;0,0)\quad\text{in $L^k(\Omega)$}.
          \end{align*}
          For the denominator, we first derive the following bound:  by Jensen's inequality, 
          \[
\begin{aligned}
&\bigg(\int_\R  p_{sx^{-2}(t-sx^{-2})/t}(z-\frac{s}{tx})\bar{\G}(t,0;\frac{s}{x^2},z)\bar{\G}(\frac{s}{x^2},z;0,0)\d z \bigg)^{-1}\\
&\qquad\leq  \int_\R  p_{sx^{-2}(t-sx^{-2})/t}(z-\frac{s}{tx})\big(\bar{\G}(t,0;\frac{s}{x^2},z)\bar{\G}(\frac{s}{x^2},z;0,0)\big)^{-1}\d z 
\end{aligned}
\]
which together with \eqref{eq:moments} implies that 
\[
\|\big(\int_\R  p_{sx^{-2}(1-sx^{-2})/t}(z-\frac{s}{tx})\bar{\G}(t,0;\frac{s}{x^2},z)\bar{\G}(\frac{s}{x^2},y;0,0)\d z\big)^{-1}\|_k \les 1.
\]

To complete the proof, it suffices to show that the denominator on the right-hand side of \eqref{eq:denominator} converges to $\bar{\G}(t,0;0,0)$ in $L^k(\Omega)$ as $x\to\infty$. 
    We write the error as 
    \[
    \begin{aligned}
    &\int_\R  p_{sx^{-2}(t-sx^{-2})/t}(z-\frac{s}{tx})\bar{\G}(t,0;\frac{s}{x^2},z)\bar{\G}(\frac{s}{x^2},z;0,0)\d z- \bar{\G}(t,0;0,0)\\
    &=\int_\R  p_{sx^{-2}(t-sx^{-2})/t}(z-\frac{s}{tx})[\bar{\G}(t,0;\frac{s}{x^2},z)-\bar{\G}(t,0;0,0)]\bar{\G}(\frac{s}{x^2},z;0,0)\d z\\
    &\quad+\int_\R  p_{sx^{-2}(t-sx^{-2})/t}(z-\frac{s}{tx})\bar{\G}(t,0;0,0)[\bar{\G}(\frac{s}{x^2},z;0,0)-1]\d z=:J_1+J_2.
    \end{aligned}
    \]
    For $J_1$, by \eqref{eq:GG} and \eqref{eq:holder2}, we have 
    \[
    \|J_1\|_k \les \int_\R p_{sx^{-2}(t-sx^{-2})/t}(z-\frac{s}{tx})[|z|(sx^{-2})^{1/4}+|z|^{1/2}] \d z\to0
    \]
    as $x\to\infty$. For $J_2$, by \eqref{eq:G}, we have 
    \[
    \|J_2\|_k\les \int_\R p_{sx^{-2}(t-sx^{-2})/t}(z-\frac{s}{tx})(sx^{-2})^{1/4} \d z=(sx^{-2})^{1/4}\to0
    \]
    as $x\to\infty$.
    The proof is complete.        
\end{proof}

With the above preparations, we are now ready to prove Theorem \ref{th:cov}.

\begin{proof}[Proof of Theorem \ref{th:cov}]
Fix $\beta\in (0, 1/2)$.
By \eqref{eq:covh} and  Lemmas~\ref{lem:appro1} and \ref{lem:appro2},  as $x\to\infty$,
\[
\begin{aligned}
&\frac{2x}{t}\Cov[h(t,x),h(t,0)]\\
&= \int_0^{x^\beta} \int_\R \EE \left[\mathcal{A}(t,x;\tfrac{s}{x^2},\tfrac{y}{x})\right] 
 \frac{1}{\pi s(t-sx^{-2})}\e^{-\frac{t(y-s/(2t))^2}{s(t-sx^{-2})}} \e^{-\frac{s}{4t(t-sx^{-2})}} \d y\d s +o(1).
\end{aligned}
\]
Since
\[
\EE\left[\mathcal{A}(t,x;\tfrac{s}{x^2},\tfrac{y}{x})\right]=\EE \left[\frac{\bar{\G}(t,x;\tfrac{s}{x^2},\tfrac{y}{x})\bar{\G}(\tfrac{s}{x^2},\tfrac{y}{x};0,0)}{\bar{\G}(t,x;0,0)}\right]
\]
and
\[
\|\bar{\G}(\tfrac{s}{x^2},\tfrac{y}{x};0,0)- 1\|_k=\|\bar{\G}(\tfrac{s}{x^2},0;0,0)- 1\|_k \les |\tfrac{s}{x^2}|^{1/4},
\]
we perform the same calculation as in  \eqref{eq:appro1} to derive that
\[
\begin{aligned}
&\frac{2x}{t}\Cov[h(t,x),h(t,0)]\\
&= \int_0^{x^\beta} \int_\R \EE \left[\frac{\bar{\G}(t,x;\tfrac{s}{x^2},\tfrac{y}{x})}{\bar{\G}(t,x;0,0)}\right]
 \frac{1}{\pi s(t-sx^{-2})}\e^{-\frac{t(y-s/(2t))^2}{s(t-sx^{-2})}} \e^{-\frac{s}{4t(t-sx^{-2})}} \d y\d s +o(1).
\end{aligned}
\]
Furthermore, because
\begin{align*}
\|\bar{\G}(t,x;\tfrac{s}{x^2},\tfrac{y}{x}) -\bar{\G}(t,x;\tfrac{s}{x^2},0)\|_k\les |y/x|^{1/2},
\end{align*}
we deduce from the calculation in \eqref{eq:appro3} that
\[
\begin{aligned}
&\frac{2x}{t}\Cov[h(t,x),h(t,0)]\\
&= \int_0^{x^\beta} \int_\R \EE \left[\frac{\bar{\G}(t,x;\tfrac{s}{x^2},0)}{\bar{\G}(t,x;0,0)}\right]
 \frac{1}{\pi s(t-sx^{-2})}\e^{-\frac{t(y-s/(2t))^2}{s(t-sx^{-2})}} \e^{-\frac{s}{4t(t-sx^{-2})}} \d y\d s +o(1)\\
 &=\int_0^{x^\beta} \EE \left[\frac{\bar{\G}(t,x;\tfrac{s}{x^2},0)}{\bar{\G}(t,x;0,0)}\right]
 \frac{1}{\sqrt{\pi st(t-sx^{-2})}}\e^{-\frac{s}{4t(t-sx^{-2})}} \d s +o(1).
\end{aligned}
\]
Since the expectation inside the above integral is uniformly bounded in $s$ and $x$ (see \eqref{eq:moments}), applying Lemma \ref{lem:asymp} and dominated convergence theorem, 
\begin{align*}
\lim_{x\to\infty}\left[\frac{2x}{t}\Cov[h(t,x),h(t,0)]\right] = \int_0^\infty \frac{1}{\sqrt{\pi st^2}}\e^{-\frac{s}{4t^2}}\d s = 2, 
\end{align*}
which completes the proof.
\end{proof}

\begin{corollary}\label{cor:cov}
          Fix $t>0$. Then
          \begin{align*}
          \lim_{N\to\infty}\frac{1}{N\log N}\EE\left[\left(\int_0^N(h(t,x)-\EE[h(t,x)])\d x\right)^2\right]=2t. 
          \end{align*}
\end{corollary}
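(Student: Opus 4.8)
The plan is to reduce the corollary to a one-dimensional integral of the covariance and then invoke Theorem~\ref{th:cov}. First I would expand the square and apply Fubini (justified since the covariance is bounded on the compact square, see below) to write
\begin{align*}
\EE\left[\left(\int_0^N(h(t,x)-\EE[h(t,x)])\,\d x\right)^2\right]=\int_0^N\int_0^N \Cov[h(t,x),h(t,x')]\,\d x\,\d x'.
\end{align*}
By the stationarity of $x\mapsto h(t,x)-\EE[h(t,x)]$ recalled before Theorem~\ref{th:cov}, the integrand equals $C(|x-x'|)$, where $C(r):=\Cov[h(t,r),h(t,0)]$. Since $C$ is even, the standard reduction of a double integral of a difference kernel gives
\begin{align*}
\int_0^N\int_0^N C(|x-x'|)\,\d x\,\d x'=2\int_0^N (N-r)\,C(r)\,\d r.
\end{align*}

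Next I would analyze the right-hand side as $N\to\infty$ by splitting it as $2N\int_0^N C(r)\,\d r-2\int_0^N rC(r)\,\d r$. Theorem~\ref{th:cov} gives $rC(r)\to t$ as $r\to\infty$, so the second piece satisfies $\int_0^N rC(r)\,\d r\sim tN$ and is therefore $O(N)$, negligible against $N\log N$. For the first piece, the asymptotic $C(r)\sim t/r$ together with the local integrability of $C$ near the origin yields $\int_0^N C(r)\,\d r\sim t\log N$: for any $\eps>0$ one has $(1-\eps)t/r\le C(r)\le(1+\eps)t/r$ once $r$ exceeds some threshold $R$, so $\int_R^N C(r)\,\d r=(1+O(\eps))\,t\log N$, while $\int_0^R C$ is a finite constant; letting $\eps\downarrow0$ gives the claim. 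Hence $2N\int_0^N C(r)\,\d r\sim 2tN\log N$, and dividing by $N\log N$ produces the limit $2t$.

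The only points requiring (minor) care are the finiteness and local integrability of $C$ near $r=0$, which underpin both the use of Fubini and the negligibility of $\int_0^R C$. Finiteness is immediate from Cauchy--Schwarz and stationarity, $|C(r)|\le\Var[h(t,0)]$; and $\Var[h(t,0)]<\infty$ follows from the Clark--Ocone representation together with the uniform moment bound \eqref{eq:moments} on $\bar\G$, the resulting space-time integral of the squared Gaussian kernels $\int_0^t (s(t-s)/t)^{-1/2}\,\d s$ being convergent. I do not anticipate any genuine obstacle: once Theorem~\ref{th:cov} is in hand the corollary is an elementary Abel/Ces\`aro-type computation, and the $\log N$ normalization is precisely the signature of the non-integrable $1/r$ decay.
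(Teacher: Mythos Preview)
Your proposal is correct and follows essentially the same route as the paper: both reduce the variance to a one-dimensional integral of $C(r)=\Cov[h(t,r),h(t,0)]$ via stationarity (the paper writes it as $\tfrac{2}{N\log N}\int_0^N\!\d y\int_0^y C(x)\,\d x$, which is your $\tfrac{2}{N\log N}\int_0^N (N-r)C(r)\,\d r$ after swapping the order of integration) and then invoke Theorem~\ref{th:cov}. Your explicit Abel/Ces\`aro splitting and the justification of $\Var[h(t,0)]<\infty$ simply spell out what the paper leaves implicit in the phrase ``the result follows from Theorem~\ref{th:cov}.''
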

\begin{proof}
          Since $\{Z(t,x)/p_t(x): x\in \R\}$ is stationary, we can write
          \begin{align*}
          &\frac{1}{N\log N}\EE\left[\left(\int_0^N(h(t,x)-\EE[h(t,x)])\d x\right)^2\right]\\
          &\qquad =\frac{1}{N\log N}\int_{[0, N]^2}\d x\d y\, \Cov[h(t,x), h(t,y)]\\
                    &\qquad =\frac{2}{N\log N}\int_0^N\d y\int_0^y\d x\, \Cov\left[\log \frac{Z(t,x)}{p_t(x)}, \log \frac{Z(t,y)}{p_t(y)}\right]\\
                                        &\qquad =\frac{2}{N\log N}\int_0^N\d y\int_0^y\d x\, \Cov[h(t,x), h(t,0)].
          \end{align*}
          Then the result follows from Theorem \ref{th:cov}.
\end{proof}

\section{CLT for spatial average: proof of Theorem \ref{th:CLT}}
\label{s.clt}
We will prove Theorem \ref{th:CLT} in this section. For $t>0$ and $N\geq1$, we denote
    \begin{align}\label{eq:X_Nt}
    X_N(t)=\frac{1}{\sqrt{N\log N}}\int_{0}^N [h(t,x)-\EE [h(t,x)]]\d x.
    \end{align}
    The goal is to show $X_N(\cdot)\Rightarrow \sqrt{2}{\rm B}(\cdot)$ in the sense of convergence of finite dimensional distributions. 
    
    Recall that by the Clark-Ocone formula, we have expressed $h(t,x)-\EE[h(t,x)]$ as an It\^o integral:
    \[
    h(t,x)-\EE[ h(t,x)]=\int_0^t\int_\R \A(t,x;s,y)p_{s(t-s)/t}(y-\frac{s}{t}x)\,\xi(\d s\, \d y),
    \]
    with $\A$ defined in \eqref{eq:A}. This implies through an application of Fubini theorem that 
    \[
    X_N(t)=\frac{1}{\sqrt{N\log N}}\int_0^t\int_\R \int_0^N\A(t,x;s,y)p_{s(t-s)/t}(y-\frac{s}{t}x) \d x\, \xi(\d s\, \d y).
    \]
    So the quantity of interest is written as an It\^o integral, which we can actually analyze directly. Here we take a different approach: define
      \begin{align}\label{eq:X_N1}
    X_{N,1}(t)= \frac{1}{\sqrt{N\log N}}\int_0^t\int_\R \int_0^N\EE[\A(t,x;s,y)]p_{s(t-s)/t}(y-\frac{s}{t}x) \d x\, \xi(\d s\, \d y).
    \end{align}
    By Stroock's formula (see \cite[Equation (7), page 3]{stroock}),  
     $X_{N,1}(t)$ is the first Wiener chaos component of $X_{N}(t)$. We claim that $X_{N,1}(t)$ is the main contribution to $X_N(t)$ as $N\to\infty$,  which in turn yields both the Gaussianity and the convergence of the variance. This is formalized in the following result:

\begin{proposition}\label{prop:twotime}
      Fix $t_1, t_2>0$. Then 
      \begin{align}\label{eq:s0}
      \lim_{N\to\infty}\Cov[X_{N,1}(t_1), X_{N,1}(t_2)]=2(t_1\wedge t_2).
      \end{align}
\end{proposition}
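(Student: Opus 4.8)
The plan is to pass, through the Wiener isometry, to a deterministic integral and then to isolate the two mechanisms that produce the factors $2$ and $t_1\wedge t_2$ separately. Since $\EE[\mathcal A(t,x;s,y)]$ is deterministic, each $X_{N,1}(t)$ is a Wiener integral of a deterministic kernel, hence centered; writing
\[
G_N(t;s,y):=\int_0^N\EE[\mathcal A(t,x;s,y)]\,p_{s(t-s)/t}\big(y-\tfrac st x\big)\,\d x,
\]
the isometry yields
\[
\Cov[X_{N,1}(t_1),X_{N,1}(t_2)]=\frac{1}{N\log N}\int_0^{t_1\wedge t_2}\!\int_\R G_N(t_1;s,y)\,G_N(t_2;s,y)\,\d y\,\d s,
\]
the upper limit $t_1\wedge t_2$ appearing because $G_N(t_i;s,\cdot)\equiv 0$ for $s>t_i$. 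The whole statement is now the asymptotic evaluation of this deterministic double integral.

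A useful exact identity, obtained from the convolution formula for the Green's function and \eqref{PPPP} exactly as in Lemma~\ref{lem:shift}, is
\[
\EE[\mathcal A(t,x;s,y)]=\EE\!\left[\frac{\bar\G(t,x;s,y)\,\bar\G(s,y;0,0)}{\int_\R p_{s(t-s)/t}(y'-\frac st x)\,\bar\G(t,x;s,y')\,\bar\G(s,y';0,0)\,\d y'}\right],
\]
so that $y\mapsto\EE[\mathcal A(t,x;s,y)]\,p_{s(t-s)/t}(y-\tfrac st x)$ is a probability density and, integrating first in $y$ and then in $x$, $\int_\R G_N(t;s,y)\,\d y=N$. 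Heuristically, for $s\gg N^{-2}$ the Gaussian width $\sqrt{s(t-s)/t}\sim\sqrt s$ is negligible against the spread $sN/t$ of the centers $\{\tfrac st x:x\in[0,N]\}$, and (established in the last paragraph) $\EE[\mathcal A]\to1$ on the relevant range, whence $G_N(t;s,\cdot)\approx\frac ts\,\1_{[0,sN/t]}$. Denoting by $G_i^0$ the kernel $G_N(t_i;s,\cdot)$ with $\EE[\mathcal A]$ replaced by $1$ and using $\int_\R p_a(y-u)p_b(y-v)\,\d y=p_{a+b}(u-v)$ to do the $y$-integral,
\[
\int_\R G_1^0G_2^0\,\d y=\int_0^N\!\int_0^N p_{s\kappa(s)}\big(\tfrac s{t_1}x-\tfrac s{t_2}x'\big)\,\d x\,\d x'=\frac{t_1t_2}{s^2}\int_0^{sN/t_1}\!\int_0^{sN/t_2}\!p_{s\kappa(s)}(u-v)\,\d u\,\d v,
\]
with $\kappa(s)=2-s/t_1-s/t_2$; for $s\gg N^{-2}$ this tends to $\frac{t_1t_2}{s^2}\cdot\frac{sN}{t_1\vee t_2}=\frac{(t_1\wedge t_2)N}{s}$, and $\frac1{N\log N}\int_{\asymp N^{-2}}^{t_1\wedge t_2}\frac{(t_1\wedge t_2)N}{s}\,\d s\sim2(t_1\wedge t_2)$. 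The factor $2$ is the $\log(N^2)$ produced by the transition scale $s\sim N^{-2}$, and $\frac{t_1t_2}{t_1\vee t_2}=t_1\wedge t_2$ is the spatial overlap.

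Making this precise splits into two tasks. The first is elementary: substituting $\rho=\sqrt s\,N$ turns $\frac1{N\log N}\int_0^{t_1\wedge t_2}\!\int_\R G_1^0G_2^0$ into $\frac{2t_1t_2}{\log N}\int_0^{\sqrt{t_1\wedge t_2}\,N}\rho^{-2}\sqrt{\kappa}\,q\big(\tfrac{\rho}{t_1\sqrt\kappa},\tfrac{\rho}{t_2\sqrt\kappa}\big)\,\d\rho$, where $q(\alpha,\beta)=\int_0^\alpha\!\int_0^\beta p_1(u-v)\,\d u\,\d v$; the integrand stays bounded as $\rho\to0$ and is $\sim[(t_1\vee t_2)\rho]^{-1}$ as $\rho\to\infty$ (the $\sqrt\kappa$ cancelling against $\min(\alpha,\beta)$), so the integral contributes the leading $\tfrac{1}{t_1\vee t_2}\log N$ and the whole thing tends to $2(t_1\wedge t_2)$. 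The range $s\lesssim N^{-2}$ is harmless: bounding $\EE[\mathcal A]\le C$ via \eqref{eq:moments}, \eqref{eq:I11} gives $\int_\R G_N(t_1;s,\cdot)G_N(t_2;s,\cdot)\le C^2\int_0^N\!\int_0^N p_{s\kappa}(\tfrac s{t_1}x-\tfrac s{t_2}x')\lesssim N^2s^{-1/2}$, contributing $O((\log N)^{-1})$. The second task is the replacement $\EE[\mathcal A]\to1$: the difference $G_N(t_i;s,\cdot)-G_i^0$ is governed by $|\EE[\mathcal A(t_i,x;s,y)]-1|$, which on the effective support $|y-\tfrac s{t_i}x|\lesssim\sqrt s$ of the Gaussian is $\lesssim s^{1/4}$ by the bound below; since $\int_0 s^{-3/4}\,\d s<\infty$ this produces only an $O((\log N)^{-1})$ error, the Gaussian tails being absorbed by \eqref{eq:moments}.

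The crux, and the step I expect to be hardest, is the uniform bound
\[
\big|\EE[\mathcal A(t,x;s,y)]-1\big|\lesssim s^{1/4}\qquad\text{for }|y-\tfrac st x|\lesssim\sqrt s,\ \text{uniformly in }x,
\]
a geodesic analogue of Lemma~\ref{lem:asymp}. Here Proposition~\ref{p.key} is useless, since along $y\approx\tfrac st x$ one has $|x-y|\asymp x$ and $\|\bar\G(t,x;s,y)-\bar\G(t,x;0,y)\|_k=O(1)$. Instead I would argue as in Lemma~\ref{lem:shift}: translating space by $-y$ in the identity above gives, for $y=\tfrac st x$ (the off-geodesic case differing only by an $O(\sqrt s)$ recentering of the window),
\[
\EE[\mathcal A(t,x;s,\tfrac st x)]=\EE\!\left[\frac{\bar\G(t,\tfrac{t-s}t x;s,0)\,\bar\G(s,0;0,-\tfrac st x)}{\int_\R p_{s(t-s)/t}(z)\,\bar\G(t,\tfrac{t-s}t x;s,z)\,\bar\G(s,z;0,-\tfrac st x)\,\d z}\right],
\]
where the factor carrying the noise on $[s,t]$ is independent of the one carrying the noise on $[0,s]$. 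Over the $O(\sqrt s)$ window weighted by $p_{s(t-s)/t}(z)$ the spatial Hölder-$\tfrac12$ regularity of $\bar\G$ (as in \eqref{eq:GG}, \eqref{eq:holder2}) renders both $z\mapsto\bar\G(t,\tfrac{t-s}t x;s,z)$ and $z\mapsto\bar\G(s,z;0,-\tfrac st x)$ constant up to $O(s^{1/4})$, so that numerator and denominator integrand nearly coincide and the ratio collapses to $\int_\R p_{s(t-s)/t}(z)\,\d z=1$. The delicate point is that this flatness, together with the inversion of the denominator through the negative-moment estimate \eqref{eq:I11}, must hold uniformly in the large separations $\tfrac{t-s}t x\sim N$ and $\tfrac st x$; establishing this uniform far-field regularity of $\bar\G$ is the main technical work, presumably the content deferred to the technical section.
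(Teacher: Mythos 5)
Your proposal is correct and follows essentially the same route as the paper: Itô isometry, truncation of the range $s\lesssim N^{-2}$, reduction of the problem to showing $\EE[\mathcal A(t,x;s,y)]\to1$ uniformly in $x$ on the effective support of the Gaussian, and the elementary overlap computation producing $2(t_1\wedge t_2)$ (the paper's Lemmas~\ref{lem:2} and \ref{lem:twotime}). The ``crux'' you single out is exactly what the paper handles via Lemma~\ref{lem:shift} in Steps 1--2 of its proof, by the same recentering/stationarity argument combined with the H\"older regularity of $\bar{\G}$ over the $O(\sqrt{s})$ window and the negative-moment bound on the denominator, yielding the same $O(s^{1/4})$ error you claim.
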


With the above result, 
we now proceed with a quick proof of Theorem~\ref{th:CLT}.

\begin{proof}[Proof of Theorem \ref{th:CLT}]
          With $X_{N}(t)$ defined in \eqref{eq:X_Nt}, we write its Wiener chaos expansion: 
          $$X_{N}(t) = \sum_{k=1}^{\infty}X_{N,k}(t),$$
          where $X_{N,k}(t)$ is the $k$-th Wiener chaos component of $X_N(t)$. In particular, $X_{N,1}(t)$ is given in \eqref{eq:X_N1}. 
          By Corollary \ref{cor:cov} and Proposition \ref{prop:twotime}, we conclude that  for each $t>0$
          \begin{align*}
          \lim_{N\to\infty} \EE[X_N(t)^2]=2t\quad\text{and}\quad \lim_{N\to\infty} \EE[X_{N,1}(t)^2]=2t.
          \end{align*}
          Thus, by the $L^2(\Omega)$-orthogonality of the components of the Wiener chaos,
          \begin{align*}
          \EE\left[(X_N(t)-X_{N,1}(t))^2\right] = \EE\left[X_{N}(t)^2\right] - \EE\left[X_{N,1}(t)^2\right]  \to 0
          \end{align*}
          as $N\to\infty$. Therefore, Theorem \ref{th:CLT} holds by Cram\'er–Wold theorem and Proposition \ref{prop:twotime}.%
\end{proof}

\subsection{Proof of Proposition~\ref{prop:twotime}}
%
%


 Applying Ito isometry we write the covariance as
 \begin{align*}
 & \Cov[X_{N,1}(t_1), X_{N,1}(t_2)]  \\
&=\frac{1}{N\log N}\int_{[0, N]^2}\d x_1\d x_2 \int_{0}^{t_1\wedge t_2} \d s\int_\R \d y\,
       \EE[\A(t_1,x_1;s,y)] \EE[\A(t_2,x_2;s,y)]\\
       &\qquad\qquad\qquad \qquad\qquad p_{s(t_1-s)/t_1}(y-\frac{s}{t_1}x_1)
       p_{s(t_2-s)/t_2}(y-\frac{s}{t_2}x_2).
      \end{align*}
      
      The rest of proof is a rather complicated calculation.

Recall that 
\[
 \A(t,x;s,y)=\EE\left[\frac{\bar{\G}(t,x;s,y)\bar\G(s,y;0,0)}{\bar{\G}(t,x;0,0)}|\F_s\right].
\]
Since the main contribution to the stochastic integral \eqref{eq:X_N1} comes from those small $s\ll1$, we define
    \begin{align*}
     \B(t,x;s,y)=\frac{\bar{\G}(t,x;s,y)}{\bar{\G}(t,x;0,0)}
     \end{align*}
    and 
    \begin{align*}
    Y_{N,1}(t)= \frac{1}{\sqrt{N\log N}}\int_0^t\int_\R \int_0^N\EE[\B(t,x;s,y)]p_{s(t-s)/t}(y-\frac{s}{t}x) \d x\, \xi(\d s\, \d y).
    \end{align*}
       By the moment estimates on 
        $\A(t,x;s,y)$ and $\B(t,x;s,y)$, \eqref{eq:G} and Lemma \ref{lem:s0}, it suffices to prove
      \begin{align}\label{eq:covY}
      \lim_{N\to\infty}\Cov(Y_{N,1}(t_1), Y_{N,1}(t_2))=2(t_1\wedge t_2).
      \end{align}

      Define 
      \begin{align*}
       &V_{N}(t_1,t_2)= \int_{[0, N]^2}\d x_1\d x_2 \int_{(t_1\wedge t_2)/N^2}^{t_1\wedge t_2} \d s\int_\R \d y\,
       \EE[\B(t_1,x_1;s,y)] \EE[\B(t_2,x_2;s,y)]\\
       &\qquad\qquad\qquad \qquad\qquad \times p_{s(t_1-s)/t_1}(y-\frac{s}{t_1}x_1)
       p_{s(t_2-s)/t_2}(y-\frac{s}{t_2}x_2).
      \end{align*}
      Applying Lemma \ref{lem:2}, proving \eqref{eq:covY} reduces to  
      \begin{align}\label{eq:cov}
       &\lim_{N\to\infty}\frac{V_N(t_1,t_2)}{N\log N}=2(t_1\wedge t_2).
      \end{align}
      By a change of variable $x_i\mapsto x_it_i,i=1,2$ and $y\mapsto ys$, we can write
      \begin{align*}
       &V_N(t_1,t_2)= t_1t_2\int_0^{N/t_1}\d x_1\int_0^{N/t_2}\d x_2  \int_{(t_1\wedge t_2)/N^2}^{t_1\wedge t_2} \frac{\d s}{s}\int_\R \d y\,
        p_{\frac{1}{s}-\frac{1}{t_1}}(y-x_1)
       p_{\frac{1}{s}-\frac{1}{t_2}}(y-x_2)
       \\
       &\qquad\qquad\qquad \qquad\qquad \times\EE[\B(t_1,t_1x_1;s,sy)] \EE[\B(t_2,t_2x_2;s,sy)].
      \end{align*} 
      Further using the change of variable of $s\to N^{-\tau}(t_1\wedge t_2), y\to y+x_2$, it follows that
      \begin{align*}
      &\frac{V_N(t_1,t_2)}{N\log N}= \frac{t_1t_2}{N}
      \int_0^{N/t_1}\d x_1\int_0^{N/t_2}\d x_2  \int_0^2\d \tau\int_\R \d y\,
        p_{\frac{N^{\tau}}{t_1\wedge t_2}-\frac{1}{t_1}}(y+x_2-x_1)
       p_{\frac{N^\tau}{t_1\wedge t_2}-\frac{1}{t_2}}(y)       \\
       &\qquad\qquad\qquad \qquad\qquad \times \EE[\B(t_1,t_1x_1;N^{-\tau}(t_1\wedge t_2),N^{-\tau}(t_1\wedge t_2)(y+x_2))] \\
              &\qquad\qquad\qquad \qquad\qquad\times \EE[\B(t_2,t_2x_2;N^{-\tau}(t_1\wedge t_2),N^{-\tau}(t_1\wedge t_2)(y+x_2))].
      \end{align*}      
       
       In order to derive \eqref{eq:cov}, we will approximate the  two expectations appearing in the above integral by $1$ as $N\to\infty$. The remainder of the proof consists of several steps to justify this approximation.
       
       Step 1. We first deal with the second expectation
       \[
       \EE[\B(t_2,t_2x_2;N^{-\tau}(t_1\wedge t_2),N^{-\tau}(t_1\wedge t_2)(y+x_2))].
       \]
      To simplify the notation, we define
      \begin{align*}
       \lambda(N)= N^{-\tau}(t_1\wedge t_2)(t_2-N^{-\tau}(t_1\wedge t_2))/t_2
      \end{align*}
      and
      \begin{align*}
       \D(N, \tau, y,x_2)&=\int_\R p_{\lambda(N)}(z+ N^{-\tau}(t_1\wedge t_2)y)\bar{\G}(t_2,0; N^{-\tau}(t_1\wedge t_2), z)\\
       &\qquad\qquad\qquad \bar{\G}
       (N^{-\tau}(t_1\wedge t_2), z+ N^{-\tau}(t_1\wedge t_2)(y+x_2);0,0)\d z.
      \end{align*}
      According to Lemma \ref{lem:shift}, we have 
      \begin{align*}
       &\EE[\B(t_2,t_2x_2;N^{-\tau}(t_1\wedge t_2),N^{-\tau}(t_1\wedge t_2)(y+x_2))]
       = \EE\left[\frac{\bar{\G}(t_2, 0; N^{-\tau}(t_1\wedge t_2),0)}{\D(N, \tau, y,x_2)}\right].
      \end{align*}
      By Jensen's inequality, $\D(N, \tau, y,x_2)$ has finite negative moments, bounded uniformly in $(N, \tau, y,x_2)$. By \eqref{eq:G},    \begin{align}\label{eq:holdert}
      \|\bar{\G}(t_2, 0; N^{-\tau}(t_1\wedge t_2),0) - \bar{\G}(t_2, 0; 0,0)\|_k\lesssim N^{-\tau/4},
      \end{align}
      which implies 
         \begin{align*}
       &\EE[\B(t_2,t_2x_2;N^{-\tau}(t_1\wedge t_2),N^{-\tau}(t_1\wedge t_2)(y+x_2))]
       = \EE\left[\frac{\bar{\G}(t_2, 0; 0,0)}{\D(N, \tau, y,x_2)}\right] + O(N^{-\tau/4}).
      \end{align*} 
      Define 
      \begin{align*}
       \D_1(N, \tau, y,x_2)&=\int_\R p_{\lambda(N)}(z+ N^{-\tau}(t_1\wedge t_2)y)\bar{\G}(t_2,0; N^{-\tau}(t_1\wedge t_2), z)\d z.
      \end{align*}
       Similar to $\D(N, \tau, y,x_2)$, the random variable $\D_1(N, \tau, y,x_2)$ has finite negative moments, bounded uniformly in $(N, \tau, y,x_2)$.
      Appealing again to \eqref{eq:G} and using stationarity, 
      \begin{align*}
      \|\D(N, \tau, y,x_2)-\D_1(N, \tau, y,x_2)\|_k \les N^{-\tau/4}.
      \end{align*}
      Notice that
      \begin{align*}
       \D_1(N, \tau, y,x_2)= \int_\R p_1(z)\bar{\G}(t_2,0; N^{-\tau}(t_1\wedge t_2), z\sqrt{\lambda(N)} - N^{-\tau}(t_1\wedge t_2)y)\d z.
      \end{align*}
      By triangle inequality and \cite[Lemmas 3.4 and 3.6]{chris},
      \begin{align*}
       &\|\bar{\G}(t_2,0; N^{-\tau}(t_1\wedge t_2), z\sqrt{\lambda(N)} - N^{-\tau}(t_1\wedge t_2)y)
       - \bar{\G}(t_2,0; 0,0)\|_k\\
       &\qquad \leq \|\bar{\G}(t_2,0; N^{-\tau}(t_1\wedge t_2), z\sqrt{\lambda(N)} - N^{-\tau}(t_1\wedge t_2)y)
       - \bar{\G}(t_2,0; N^{-\tau}(t_1\wedge t_2), 0)\|_k \\
       &\qquad \quad + \|\bar{\G}(t_2,0; N^{-\tau}(t_1\wedge t_2), 0)
       - \bar{\G}(t_2,0; 0,0)\|_k\\
       &\qquad \lesssim |z|^{1/2}N^{-\tau/4} + (1\wedge |N^{-\tau}y|^{1/2})+ N^{-\tau/4}.
      \end{align*}
      Thus, we obtain that
      \begin{align*}
      \|\D_1(N, \tau, y,x_2)-\bar{\G}(t_2,0; 0,0)\|_k\les (1\wedge |N^{-\tau}y|^{1/2}) +N^{-\tau/4},
      \end{align*}
      which implies that
      \begin{align*}
       &\EE[\B(t_2,t_2x_2;N^{-\tau}(t_1\wedge t_2),N^{-\tau}(t_1\wedge t_2)(y+x_2))]
       = 1 + O((1\wedge |N^{-\tau}y|^{1/2}) +N^{-\tau/4}).
      \end{align*} 
      Applying Lemma \ref{lem:y} below, we conclude   that as $N\to\infty$,
      \begin{align}
      \frac{V_N(t_1,t_2)}{N\log N}&=\frac{t_1t_2}{N}
      \int_0^{N/t_1}\d x_1\int_0^{N/t_2}\d x_2  \int_0^2\d \tau\int_\R \d y\,
        p_{\frac{N^{\tau}}{t_1\wedge t_2}-\frac{1}{t_1}}(y+x_2-x_1)
       p_{\frac{N^\tau}{t_1\wedge t_2}-\frac{1}{t_2}}(y)   \nonumber    \\
       &\qquad\qquad\qquad \qquad\qquad \EE[\B(t_1,t_1x_1;N^{-\tau}(t_1\wedge t_2),N^{-\tau}(t_1\wedge t_2)(y+x_2))]\label{eq:expectation}\\
       &\quad +o(1).\nonumber
      \end{align}

      Step 2. We continue to approximate the expectation appearing in \eqref{eq:expectation} by $1$ as $N\to\infty$.

      First, because
      \begin{align*}
       &\big|\EE[\B(t_1,t_1x_1;N^{-\tau}(t_1\wedge t_2),N^{-\tau}(t_1\wedge t_2)(y+x_2))]\\
       &\qquad\qquad\qquad\qquad-\EE[\B(t_1,t_1x_1;N^{-\tau}(t_1\wedge t_2),N^{-\tau}(t_1\wedge t_2)x_2)]\big|\lesssim 1\wedge|N^{-\tau}y|^{1/2},
      \end{align*}
      we obtain by Lemma \ref{lem:y} that as $N\to\infty$
            \begin{align}\label{eq:v1}
      \frac{V_N(t_1,t_2)}{N\log N}&=o(1)+\frac{t_1t_2}{N}
      \int_0^{N/t_1}\d x_1\int_0^{N/t_2}\d x_2  \int_0^2\d \tau\,
        p_{\frac{2N^{\tau}}{t_1\wedge t_2}-\frac{1}{t_1}-\frac{1}{t_2}}(x_2-x_1)\nonumber\\
       &\qquad\qquad\qquad \qquad\qquad \EE[\B(t_1,t_1x_1;N^{-\tau}(t_1\wedge t_2),N^{-\tau}(t_1\wedge t_2)x_2)].
       \end{align}
           Next, we introduce
      \begin{align*}
       \tilde{\lambda}(N)= N^{-\tau}(t_1\wedge t_2)(t_1-N^{-\tau}(t_1\wedge t_2))/t_1,
      \end{align*}
      and
      \begin{align*}
       \tilde{\D}(N, \tau,x_1,x_2)&=\int_\R p_{\tilde{\lambda}(N)}(z+ N^{-\tau}(t_1\wedge t_2)(x_2-x_1))\bar{\G}(t_1,0; N^{-\tau}(t_1\wedge t_2), z)\\
       &\qquad\qquad\qquad \bar{\G}
       (N^{-\tau}(t_1\wedge t_2), z+ N^{-\tau}(t_1\wedge t_2)x_2;0,0)\d z.
      \end{align*}
      Then, by Lemma \ref{lem:shift},
      \begin{align*}
       \EE[\B(t_1,t_1x_1;N^{-\tau}(t_1\wedge t_2),N^{-\tau}(t_1\wedge t_2)x_2)]
       &=\EE\left[\frac{\bar{\G}(t_1, 0; N^{-\tau}(t_1\wedge t_2),0)}{\tilde{\D}(N, \tau,x_1,x_2)}\right]\\
       &=\EE\left[\frac{\bar{\G}(t_1, 0; 0,0)}{\tilde{\D}(N, \tau,x_1,x_2)}\right]+ O(N^{-\tau/4}),
      \end{align*}
      where the second equality holds by \eqref{eq:holdert}.

      We will approximate the denominator in the above expectation by $\bar\G(t_1,0;0,0)$ using the same arguments as before. By \eqref{eq:holdert} and stationarity
      \begin{align*}
       &\left\|\tilde{\D}(N, \tau,x_1,x_2)-\int_\R p_{\tilde{\lambda}(N)}(z+ N^{-\tau}(t_1\wedge t_2)(x_2-x_1))
       \bar{\G}(t_1,0; N^{-\tau}(t_1\wedge t_2),z)\d z\right\|_k
       \les N^{-\tau/4}
       \end{align*}
       which implies that
       \begin{align*}
       &\left\|\tilde{\D}(N, \tau,x_1,x_2)-\int_\R p_1(z)\bar{\G}(t_1,0; N^{-\tau}(t_1\wedge t_2),z\sqrt{\tilde{\lambda}(N)}-N^{-\tau}(t_1\wedge t_2)(x_2-x_1))\d z\right\|_k\\
       & \quad \les N^{-\tau/4}.
       \end{align*}
       Because
       \begin{align*}
       &\left\|\bar{\G}(t_1,0; N^{-\tau}(t_1\wedge t_2),z\sqrt{\tilde{\lambda}(N)}-N^{-\tau}(t_1\wedge t_2)(x_2-x_1))-\bar{\G}(t_1,0;0,0)\right\|_k\\
       &\qquad \qquad \qquad\qquad\qquad\qquad\les N^{-\tau/4}+
       \left(1\wedge \left|z\sqrt{\tilde{\lambda}(N)}-N^{-\tau}(t_1\wedge t_2)(x_2-x_1)\right|^{1/2}\right),
       \end{align*}
       it follows that
       \begin{align*}
       &\left\|\tilde{\D}(N, \tau,x_1,x_2)- \bar{\G}(t_1,0;0,0)\right\|_k\\
      &\qquad\qquad\qquad\les
                    N^{-\tau/4}
                    + \int_\R p_1(z)\left[1\wedge \left|z\sqrt{\tilde{\lambda}(N)}-N^{-\tau}(t_1\wedge t_2)(x_2-x_1)\right|^{1/2}\right]\d z.
       \end{align*}
       
       Hence, since $\tilde{\D}(N, \tau,x_1,x_2)$ has finite negative moments, bounded uniformly in $(N, \tau,x_1,x_2)$, we deduce that
       \begin{align}\label{eq:v2}
       &\left|\EE[\B(t_1,t_1x_1;N^{-\tau}(t_1\wedge t_2),N^{-\tau}(t_1\wedge t_2)x_2)]-1\right|\nonumber\\
       &\qquad \qquad \qquad \lesssim 
       \int_\R p_1(z)\left[1\wedge \left|z\sqrt{\tilde{\lambda}(N)}-N^{-\tau}(t_1\wedge t_2)(x_2-x_1)\right|^{1/2}\right]\d z
                    +N^{-\tau/4}.
       \end{align}
       
       Step 3. Replacing the expectation in \eqref{eq:v1} by the integral in \eqref{eq:v2}, we claim that the resulting quantity converges to $0$ as $N\to\infty$.

       To see this, introduce the following two functions
       \begin{align*}
       1_N(x):=\frac1N1_{[0,N]}(x) \quad \text{and} \quad \tilde{1}_N(x):=1_N(-x), \quad x\in \R.
\end{align*}
       Denote $t^*=\frac{1}{t_1\wedge t_2}$ and write
       \begin{align*}
        &\frac{1}{Nt^*}
      \int_{[0, Nt^*]^2}\d x_1\d x_2 \int_0^2\d \tau\,
        p_{\frac{2N^{\tau}}{t_1\wedge t_2}-\frac{1}{t_1}-\frac{1}{t_2}}(x_2-x_1)\\
       &\qquad\qquad\qquad\qquad\qquad \int_\R p_1(z)\left[1\wedge \left|z\sqrt{\tilde{\lambda}(N)}-N^{-\tau}(t_1\wedge t_2)(x_2-x_1)\right|^{1/2}\right]\d z\\
       &\quad = Nt^*
      \int_\R \d y \left(1_{Nt^*}*\tilde{1}_{Nt^*}\right)(y) \int_0^2\d \tau\,
        p_{\frac{2N^{\tau}}{t_1\wedge t_2}-\frac{1}{t_1}-\frac{1}{t_2}}(y)\\
       &\qquad\qquad\qquad\qquad\qquad \int_\R p_1(z)\left[1\wedge \left|z\sqrt{\tilde{\lambda}(N)}-N^{-\tau}(t_1\wedge t_2)y\right|^{1/2}\right]\d z\\
       &\quad =  Nt^*
      \int_\R \d y \left(1_{Nt^*}*\tilde{1}_{Nt^*}\right)(y\sqrt{\frac{2N^{\tau}}{t_1\wedge t_2}-\frac{1}{t_1}-\frac{1}{t_2}}) \int_0^2\d \tau\,
        p_{1}(y)\\
       &\qquad\qquad\qquad \int_\R p_1(z)\left[1\wedge \left|z\sqrt{\tilde{\lambda}(N)}-N^{-\tau}(t_1\wedge t_2)y
       \sqrt{\frac{2N^{\tau}}{t_1\wedge t_2}-\frac{1}{t_1}-\frac{1}{t_2}}
       \right|^{1/2}\right]\d z.
       \end{align*}
       Because the function $Nt^*(1_{Nt^*}*\tilde{1}_{Nt^*})$ is bounded above by $1$, the above integral converges to $0$ as $N\to\infty$ by the dominated
       convergence theorem.

       Finally, we conclude from \eqref{eq:v1} and \eqref{eq:v2} that
       \begin{align*}
       \lim_{N\to\infty}\frac{V_N(t_1,t_2)}{N\log N} &= \lim_{N\to\infty}\left[ \frac{t_1t_2}{N}
      \int_0^{N/t_1}\d x_1\int_0^{N/t_2}\d x_2  \int_0^2\d \tau\,
        p_{\frac{2N^{\tau}}{t_1\wedge t_2}-\frac{1}{t_1}-\frac{1}{t_2}}(x_2-x_1)\right]\\
        &= 2(t_1\wedge t_2).
       \end{align*}
       where the second equality holds by Lemma \ref{lem:twotime}.  This proves \eqref{eq:cov} and hence completes the proof of Proposition \ref{prop:twotime}.     

\begin{remark}
          Theorem \ref{th:CLT} can also be proved using Malliavin-Stein's method for the CLT of SPDEs, originally introduced in \cite{HNV2018}. For instance, one can show that for fixed $t>0$, 
          there exists a constant $c>0$ depending on $t$ such that for all $N\geq\e$,
          \begin{align}\label{TVDeq}
		d_{\rm TV} \left(  \frac{ X_{N}(t)}
		{ \sqrt{{\rm Var}(X_{N}(t))}} ~,~
		{\rm N}(0,1)\right) \leq  c\,\sqrt{\frac{\log N}{ N}},
	\end{align}
	where $d_{\rm TV}$ denotes the total variation distance, and ${\rm N}(0,1)$
	denotes the standard normal distribution. The proof of \eqref{TVDeq} is similar to that of \cite[Theorem 1.2]{CKNPa} and the main difference is that we need the estimate on the second Malliavin derivative of the solution to \eqref{eq:SHE}, which was obtained  in \cite[Theorem 1.1]{KuN23}. We do not include the details here. 
\end{remark}

\begin{remark}
          It is natural to ask if a functional CLT holds as in Theorem \ref{th:CLT}. In the case of flat initial condition, the function CLT for the spatial average of the KPZ solution has been proved by Chen et al. \cite[Theorem 2.3]{CKNP23}, where they established the tightness in Proposition 5.3. Their method does not seem to allow us to  prove the tightness for the spatial average of the solution to the KPZ equation from narrow wedge initial data. One technical issue is that in our case, for any $t_1\neq t_2$, the distribution of $\bar\G(t_2,x;0,0)- \bar{\G}(t_1,x;0,0)$ depends on $x$. We leave this problem to interested reader. 
\end{remark}

\appendix

\section{Technical estimates on integrals}\label{s.tech} 
This section is devoted to several technical lemmas concerning estimates of (deterministic) integrals.
\begin{lemma}\label{lem:s0}
       Fix $t_1, t_2>0$. Then
       \begin{align*}
        &\lim_{N\to\infty} \bigg[\frac{1}{N\log N} \int_{[0, N]^2}\d x_1\d x_2\int_0^{t_1\wedge t_2}\d s\int_\R \d y\,
        s^{1/4}\\
        &\qquad\qquad\qquad\qquad\qquad \qquad p_{s(t_1-s)/t_1}(y-\frac{s}{t_1}x_1)p_{s(t_2-s)/t_2}(y-\frac{s}{t_2}x_2)\bigg]=0.
       \end{align*}
\end{lemma}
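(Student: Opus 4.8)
The plan is to integrate out the variables in the order $y$, then $(x_1,x_2)$, then $s$, extracting at each stage the cancellation that forces the whole quantity to vanish. Write $T=t_1\wedge t_2$ and set $a=s(t_1-s)/t_1$, $b=s(t_2-s)/t_2$, both strictly positive on $(0,T)$. The first step uses the Gaussian product identity
\[
\int_\R p_a(y-c_1)\,p_b(y-c_2)\,\d y = p_{a+b}(c_1-c_2),
\]
which follows from $p_a*p_b=p_{a+b}$ together with the symmetry of the heat kernel. Taking $c_1=\tfrac{s}{t_1}x_1$ and $c_2=\tfrac{s}{t_2}x_2$, the inner $y$-integral collapses and the quantity in the statement becomes
\[
\frac{1}{N\log N}\int_0^T \d s\; s^{1/4}\int_{[0,N]^2}\d x_1\,\d x_2\; p_{a+b}\!\left(\tfrac{s}{t_1}x_1-\tfrac{s}{t_2}x_2\right).
\]

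Next I would bound the spatial double integral. Substituting $\xi_i=\tfrac{s}{t_i}x_i$ rewrites it as $\tfrac{t_1t_2}{s^2}\int_0^{sN/t_1}\!\int_0^{sN/t_2} p_{a+b}(\xi_1-\xi_2)\,\d\xi_1\,\d\xi_2$. Since $p_{a+b}$ integrates to $1$ over $\R$ for \emph{any} positive value of $a+b$, enlarging either inner interval to all of $\R$ gives the crude but sufficient estimate $\int_0^A\!\int_0^B p_{a+b}(\xi_1-\xi_2)\,\d\xi_1\,\d\xi_2\le \min(A,B)$. With $A=sN/t_1$ and $B=sN/t_2$ this yields $\min=sN/(t_1\vee t_2)$, so the spatial integral is at most $\tfrac{t_1t_2}{s^2}\cdot\tfrac{sN}{t_1\vee t_2}=\tfrac{TN}{s}$, using $t_1t_2/(t_1\vee t_2)=T$.

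Inserting this bound, the remaining $s$-integral is elementary:
\[
\frac{1}{N\log N}\int_0^T s^{1/4}\cdot\frac{TN}{s}\,\d s
=\frac{T}{\log N}\int_0^T s^{-3/4}\,\d s
=\frac{4\,T^{5/4}}{\log N}\;\longrightarrow\;0 .
\]
This completes the argument.

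The crucial point, and essentially the only subtlety worth flagging, is the role of the extra weight $s^{1/4}$: it converts the $s^{-1}$ singularity into the integrable $s^{-3/4}$. That $s^{-1}$ singularity is precisely what produces the factor $\log N$ in the leading-order computation of $V_N$ (where $s$ is cut off at order $N^{-2}$); here the $s^{1/4}$ removes the logarithmic divergence, so the $s$-integral is a finite constant and the surviving $\tfrac{1}{\log N}$ forces the limit to be $0$. In particular no lower cutoff on $s$ is needed, and the possible degeneracy $a+b\to 0$ at $s=T$ (when $t_1=t_2$) is harmless, since the bound on the spatial integral only invokes $\int_\R p_{a+b}=1$ and never a lower bound on $a+b$.
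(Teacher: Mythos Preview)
Your proof is correct. Both you and the paper integrate out $y$ via the semigroup identity and reduce to an $s$-integral with singularity $s^{-3/4}$; the extra $s^{1/4}$ is precisely what makes this integrable, leaving only the surviving factor $1/\log N$.

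The difference lies in how the spatial double integral is handled. After the change of variables, the paper invokes a Fourier/Parseval representation (their \cite[Lemma A.2]{CKNPa}) to write $\int_{[0,M]^2} p_\sigma(x_1-x_2)\,\d x_1\,\d x_2$ as an integral of $\tfrac{1-\cos z}{z^2}$ against a Gaussian, and then bounds the Gaussian factor by $1$. You instead use the one-line bound $\int_0^A\!\int_0^B p_\sigma(\xi_1-\xi_2)\,\d\xi_1\,\d\xi_2\le \min(A,B)$, which follows immediately from $\int_\R p_\sigma=1$. Your route is more elementary and self-contained: it avoids the external lemma and never needs to examine the variance $a+b$, which is why the potential degeneracy at $s=T$ (when $t_1=t_2$) is automatically harmless, as you note. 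The paper's Fourier approach is consistent with the machinery used in the other appendix lemmas (Lemmas~\ref{lem:2} and~\ref{lem:twotime}), where the Fourier side genuinely helps, but for this particular lemma your direct argument is cleaner.
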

\begin{proof}
      By the semigroup property of the heat kernel and a change of variable, we write the left-hand side as
      \begin{align*}
        &\frac{t_1t_2}{N\log N} \int_0^{N/t_1}\d x_1\int_0^{N/t_2}\d x_2\int_0^{t_1\wedge t_2}\frac{\d s}{s}
        s^{1/4}p_{\frac{2}{s}-\frac{1}{t_1}-\frac{1}{t_2}}(x_2-x_1)\\
        &\quad \leq \frac{t_1t_2}{N\log N} \int_{[0, N/(t_1\wedge t_2)]^2}\d x_1\d x_2\int_0^{t_1\wedge t_2}\frac{\d s}{s^{3/4}}
        p_{\frac{2}{s}-\frac{1}{t_1}-\frac{1}{t_2}}(x_2-x_1)\\
        &\quad = \frac{\pi^{-1}t_1t_2}{(t_1\wedge t_2)\log N} \int_0^{t_1\wedge t_2}\frac{\d s}{s^{3/4}}
         \int_{\R} \frac{1-\cos z}{z^2}\exp\left[-\left(\frac{1}{s}-\frac{1}{2t_1}-\frac{1}{2t_2}\right)\frac{z^2}{(t_1\wedge t_2)^2N^2}\right]\d z,
      \end{align*}
      where the equality holds by \cite[Lemma A.2]{CKNPa}. 
      We bound the exponential term above by $1$ so that the integral is bounded from above by $1/\log N$ multiple of a constant, which completes the proof. 
\end{proof}

\begin{lemma}\label{lem:2}
          Fix $t_1, t_2>0$. Then
          \begin{align*}
          \lim_{N\to\infty}\frac{1}{N\log N}\int_{[0, N]^2}\d x_1\d x_2\int^{(t_1\wedge t_2)/N^2}_{0}\frac{\d s}{s}
                 p_{(t_1-s)/(t_1s) + (t_2-s)/(t_2s)}(\frac{x_1}{t_1}- \frac{x_2}{t_2})=0.
          \end{align*}
\end{lemma}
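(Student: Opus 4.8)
The plan is to mimic the computation in the proof of Lemma~\ref{lem:s0}, the essential new difficulty being that the time integral now carries a genuine $\frac1s$ singularity at the origin (there is no compensating factor $s^{1/4}$ in the integrand), so the Gaussian weight inside the heat kernel must be \emph{used} to tame it rather than merely bounded by $1$.

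First I would note that $(t_1-s)/(t_1 s)+(t_2-s)/(t_2 s)=\frac2s-\frac1{t_1}-\frac1{t_2}=:\sigma(s)$, and perform the change of variables $x_i\mapsto t_ix_i$, $i=1,2$, which turns the argument of the heat kernel into $x_1-x_2$ and produces a Jacobian $t_1t_2$. Since $[0,N/t_1]\times[0,N/t_2]\subseteq[0,M]^2$ with $M:=N/(t_1\wedge t_2)$ and the integrand is nonnegative, I can enlarge the spatial domain to the square $[0,M]^2$ and invoke the identity \cite[Lemma A.2]{CKNPa} exactly as in Lemma~\ref{lem:s0} to evaluate the spatial integral. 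Writing $a:=(t_1\wedge t_2)^2/N^2=M^{-2}$ and $b:=\frac1{2t_1}+\frac1{2t_2}$, the exponent becomes $-\frac{\sigma(s) z^2}{2M^2}=-\big(\frac as-ab\big)z^2$, so the quantity to be estimated is bounded above by
\begin{align*}
\frac{t_1t_2}{\pi(t_1\wedge t_2)\log N}\int_0^{(t_1\wedge t_2)/N^2}\frac{\d s}{s}\int_\R\frac{1-\cos z}{z^2}\,e^{ab z^2}\,e^{-az^2/s}\,\d z.
\end{align*}

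Next, using Tonelli's theorem (everything is nonnegative) I would carry out the $s$-integral for fixed $z$ first: the substitution $r=az^2/s$ converts $\int_0^{(t_1\wedge t_2)/N^2}\frac{\d s}{s}e^{-az^2/s}$ into the exponential integral $E_1\big((t_1\wedge t_2)z^2\big)=\int_{(t_1\wedge t_2)z^2}^\infty r^{-1}e^{-r}\,\d r$, where I used $a\big/\big((t_1\wedge t_2)/N^2\big)=t_1\wedge t_2$. Hence the whole expression reduces to
\begin{align*}
\frac{t_1t_2}{\pi(t_1\wedge t_2)\log N}\int_\R\frac{1-\cos z}{z^2}\,e^{ab z^2}\,E_1\big((t_1\wedge t_2)z^2\big)\,\d z.
\end{align*}
It then remains to show the $z$-integral stays bounded as $N\to\infty$, so that the $1/\log N$ prefactor forces the limit to be $0$. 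Since $ab=(t_1\wedge t_2)^2b/N^2\to0$, for $N$ large one has $ab\le(t_1\wedge t_2)/2$; combined with the elementary bound $E_1(\zeta)\le \zeta^{-1}e^{-\zeta}$, the fact that $E_1$ has only a logarithmic singularity as $\zeta\to0$, and $\frac{1-\cos z}{z^2}\le\frac12\wedge\frac2{z^2}$, the integrand is dominated by an $N$-independent integrable function (logarithmic singularity at $z=0$, Gaussian decay $e^{-(t_1\wedge t_2)z^2/2}/z^2$ at infinity). Dominated convergence then gives a finite limit, and in particular a uniform-in-$N$ bound; thus the displayed quantity is $O(1/\log N)\to0$.

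The main obstacle is precisely the absorption of the $\frac1s$ singularity: bounding the Gaussian factor $e^{-\sigma(s) z^2/(2M^2)}$ by $1$ (the shortcut available in Lemma~\ref{lem:s0}, where the extra $s^{1/4}$ makes $\int_0^{t_1\wedge t_2}s^{-3/4}\,\d s$ convergent) here produces the \emph{divergent} integral $\int_0^{(t_1\wedge t_2)/N^2}\frac{\d s}{s}$ and destroys the argument. One must instead retain the full $s$-dependence of the Gaussian weight, which upon exchanging the order of integration turns the singular time integral into $E_1\big((t_1\wedge t_2)z^2\big)$; verifying that the resulting $z$-integral has only an integrable logarithmic singularity at the origin and remains uniformly bounded in $N$—in particular that the spurious factor $e^{abz^2}$ does not overpower the Gaussian decay for large $z$—is the delicate point.
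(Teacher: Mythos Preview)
Your proposal is correct and follows essentially the same route as the paper: both pass to Fourier variables, absorb the $\tfrac1s$ singularity by evaluating the time integral against the Gaussian weight (the paper writes the result as $\e\log(\e+\e/a)$, you identify it as $E_1((t_1\wedge t_2)z^2)$), and then check that the resulting $z$-integral has only an integrable logarithmic singularity at the origin and decays at infinity, so that the $1/\log N$ prefactor kills it. The sole cosmetic difference is that the paper keeps the rectangle $[0,1/t_1]\times[0,1/t_2]$ and applies Parseval directly to the two indicators, whereas you first enlarge to the square $[0,M]^2$ as in Lemma~\ref{lem:s0}.
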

\begin{proof}
          By change of variable and writing the integral in the Fourier domain, we write the term on the left-hand side as
          \begin{align*}
          &\frac{t_1t_2}{\log N}\int_0^{1/t_1}\d x_1\int_0^{1/t_2}\d x_2  \int^{(t_1\wedge t_2)/N^2}_{0}\frac{\d s}{s}p_{\frac2{sN^2}-(\frac1{t_1N^2}+\frac{1}{t_2N^2})}(x_1-x_2)\\
          &= \frac{t_1t_2}{2\pi\log N}  \int^{\frac{t_1\wedge t_2}{N^2}}_{0}\frac{\d s}{s} \int_\R \d z\,  \widehat{1_{[0, 1/t_1]}}(z) \overline{\widehat{1_{[0, 1/t_2]}}(z)}          \exp\left[-\left[\frac{1}{sN^2}- (\frac1{2t_1N^2}+\frac{1}{2t_2N^2})\right]z^2\right]\\
          & \leq  \frac{t_1t_2}{\log N}  \int^{\frac{t_1\wedge t_2}{N^2}}_{0}\frac{\d s}{s} \int_\R \d z\,  |\widehat{1_{[0, 1/t_1]}}(z) \overline{\widehat{1_{[0, 1/t_2]}}(z)}|
          \exp\left[-\left(\frac{1}{sN^2}- \frac1{(t_1\wedge t_2)N^2}\right)z^2\right]\\
          &=   \frac{t_1t_2}{\log N}  \int^{1}_{0}\frac{\d r}{r} \int_\R \d z\,  |\widehat{1_{[0, 1/t_1]}}(z) \overline{\widehat{1_{[0, 1/t_2]}}(z)}|
          \exp\left[-\left(\frac{1}{r}- \frac1{N^2}\right)z^2/(t_1\wedge t_2)\right],
          \end{align*}
          where the first equality holds by Parseval identity.  Notice that for all $N\geq1$ and $a>0$,
          \begin{align*}
          \int^{1}_{0}\exp\left[-\left(\frac{1}{r}- \frac1{N^2}\right)a\right] \frac{\d r}{r} &\leq \int^{1}_{0}\exp\left[-\left(\frac{1-r}{r}\right)a\right] \frac{\d r}{r}\\
          &=\e^a \int_a^\infty \frac{\e^{-s}\d s}{s} \leq  \e \log (\e+\frac\e a).
          \end{align*}
          Taking into account 
          \begin{align*}
          \left|\widehat{1_{[0, 1/t_1]}}(z) \overline{\widehat{1_{[0, 1/t_2]}}(z)} \right| \lesssim \frac{1\wedge z^2}{z^2} \quad \text{for all $z\neq 0$}
          \end{align*}
          and
          \begin{align*}
          \int_\R \frac{1\wedge z^2}{z^2}    \log (\e+\frac{\e (t_1\wedge t_2)}{z^2})\d z <\infty,
          \end{align*}
          the result follows. 
\end{proof}

\begin{lemma} \label{lem:y}
      Fix $t_1,t_2>0$. Then 
            \begin{align*}
      &\lim_{N\to\infty}\bigg[\frac{1}{N}
      \int_0^{N/t_1}\d x_1\int_0^{N/t_2}\d x_2  \int_0^2\d \tau\int_\R \d y\,\\
      &\qquad\qquad\qquad \qquad\qquad
        p_{\frac{N^{\tau}}{t_1\wedge t_2}-\frac{1}{t_1}}(y+x_2-x_1)
       p_{\frac{N^\tau}{t_1\wedge t_2}-\frac{1}{t_2}}(y)(1\wedge |N^{-\tau}y|^{1/2})\bigg]=0.
       \end{align*}
\end{lemma}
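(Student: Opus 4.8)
The plan is to exploit the extra factor $1\wedge|N^{-\tau}y|^{1/2}$, which is precisely what distinguishes this integral from the main term: integrating the two Gaussians in $y$ (without the extra factor) reproduces the kernel $p_{\frac{2N^\tau}{t_1\wedge t_2}-\frac1{t_1}-\frac1{t_2}}(x_2-x_1)$ appearing in Lemma~\ref{lem:twotime}, so the claim is that the fractional factor forces the whole quantity to vanish. Write $a_1=\frac{N^\tau}{t_1\wedge t_2}-\frac1{t_1}$ and $a_2=\frac{N^\tau}{t_1\wedge t_2}-\frac1{t_2}$ for the two variances; since $t_1\wedge t_2\le t_1\wedge t_2$ is bounded by both $t_1$ and $t_2$, both $a_1,a_2$ are strictly positive for every $\tau\in(0,2]$ and $N>1$, so the only degeneracy is the single point $\tau=0$, which contributes nothing to the $\tau$-integral. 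All integrands are nonnegative, so Tonelli applies throughout and I may integrate in any order.

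First I would integrate out $x_1$. Since $x_1\mapsto p_{a_1}(y+x_2-x_1)$ is a probability density,
\[
\int_0^{N/t_1}p_{a_1}(y+x_2-x_1)\,\d x_1\le\int_\R p_{a_1}(y+x_2-x_1)\,\d x_1=1
\]
uniformly in $(y,x_2,\tau,N)$. This bounds the full expression by
\[
\frac1N\int_0^{N/t_2}\d x_2\int_0^2\d\tau\int_\R\d y\,p_{a_2}(y)\,(1\wedge|N^{-\tau}y|^{1/2}).
\]
The remaining $y$- and $\tau$-integrals no longer depend on $x_2$, so the $x_2$-integration merely contributes a factor $N/t_2$ that cancels the prefactor $1/N$, leaving
\[
\frac1{t_2}\int_0^2\d\tau\int_\R p_{a_2}(y)\,(1\wedge|N^{-\tau}y|^{1/2})\,\d y.
\]

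For this last quantity I would discard the minimum via $1\wedge|N^{-\tau}y|^{1/2}\le|N^{-\tau}y|^{1/2}$ and evaluate the resulting half-moment of a centered Gaussian of variance $a_2$: substituting $y=\sqrt{a_2}\,z$ gives
\[
\int_\R p_{a_2}(y)\,|N^{-\tau}y|^{1/2}\,\d y=C\,N^{-\tau/2}a_2^{1/4},\qquad C:=\int_\R p_1(z)|z|^{1/2}\,\d z<\infty.
\]
The crude bound $a_2\le\frac{N^\tau}{t_1\wedge t_2}$ yields $a_2^{1/4}\le(t_1\wedge t_2)^{-1/4}N^{\tau/4}$, so the $y$-integral is at most $C(t_1\wedge t_2)^{-1/4}N^{-\tau/4}$. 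Consequently the whole expression is bounded by
\[
\frac{C}{t_2}(t_1\wedge t_2)^{-1/4}\int_0^2 N^{-\tau/4}\,\d\tau=\frac{C}{t_2}(t_1\wedge t_2)^{-1/4}\,\frac{4\,(1-N^{-1/2})}{\log N}\xrightarrow[N\to\infty]{}0,
\]
which is the assertion.

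There is no genuine obstacle here; the estimate becomes routine once the two spatial integrations are carried out first, decoupling the Gaussians and eliminating the $x_1,x_2$ dependence. The only point requiring care is to recognize where the decay comes from: it is \emph{not} produced by the $1/N$ prefactor, which is entirely consumed by the $x_2$-integration, but rather by the $\tau$-integral, since the half-moment estimate converts the extra factor into $N^{-\tau/4}$ whose integral over $\tau\in[0,2]$ is of order $1/\log N$. This is exactly the mechanism by which the error terms generated in Steps~1--3 of the proof of Proposition~\ref{prop:twotime} are absorbed, the factor $1/\log N$ here matching the $\log N$ produced by the change of variables $s=N^{-\tau}(t_1\wedge t_2)$.
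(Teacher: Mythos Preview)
Your proof is correct and follows essentially the same route as the paper: bound the $x_1$-integral by $1$ using that $p_{a_1}$ is a probability density, cancel the $1/N$ against the $x_2$-integral, and reduce to $\tfrac{1}{t_2}\int_0^2\d\tau\int_\R p_{a_2}(y)(1\wedge|N^{-\tau}y|^{1/2})\,\d y$. The only difference is the last step: the paper rescales $y\mapsto\sqrt{a_2}\,y$ and invokes dominated convergence, whereas you drop the minimum, compute the Gaussian half-moment, and obtain the explicit rate $O(1/\log N)$ --- a minor and slightly more quantitative variant of the same argument.
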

\begin{proof}
      We bound the integral $\int_0^{N/t_1}\d x_1$ above by $\int_\R \d x_1$ in order to see that left-hand side is bounded above by
      \begin{align*}
       &t_2^{-1}\int_0^2\d \tau\int_\R \d y\,
       p_{\frac{N^\tau}{t_1\wedge t_2}-\frac{1}{t_2}}(y)(1\wedge |N^{-\tau}y|^{1/2})\\
       &\qquad\qquad\qquad \qquad
       =t_2^{-1}\int_0^2\tau\int_\R\d y\, p_{1}(y)(1\wedge |N^{-\tau}y\sqrt{\frac{N^\tau}{t_1\wedge t_2}-\frac{1}{t_2}}|^{1/2}),
      \end{align*}
      which converges to $0$ as $N\to\infty$ by dominated convergence theorem. 
\end{proof}

\begin{lemma}\label{lem:twotime}
       Fix $t_1,t_2>0$. Then
      \begin{align*}
        \lim_{N\to\infty} \frac{t_1t_2}{N}
      \int_0^{N/t_1}\d x_1\int_0^{N/t_2}\d x_2  \int_0^2\d \tau\,
        p_{\frac{2N^{\tau}}{t_1\wedge t_2}-\frac{1}{t_1}-\frac{1}{t_2}}(x_2-x_1)
        &= 2(t_1\wedge t_2).
       \end{align*}
\end{lemma}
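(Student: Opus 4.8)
The plan is to perform the $x_1,x_2$ integration first, reducing the inner double integral to a one-dimensional integral against the heat kernel, and only afterwards integrate in $\tau$. Write $A=N/t_1$, $B=N/t_2$, and $a=a(\tau)=\frac{2N^\tau}{t_1\wedge t_2}-\frac{1}{t_1}-\frac{1}{t_2}$. Substituting $w=x_2-x_1$, I would express the inner double integral as
\[
D(a):=\int_\R p_a(w)\,\phi(w)\,\d w,\qquad \phi(w):=\int_\R 1_{[0,A]}(x)\,1_{[0,B]}(x+w)\,\d x,
\]
where $\phi$ is the overlap (self-correlation) of the two indicators. The key structural fact is that $\phi$ is an explicit trapezoid: it is supported on $(-A,B)$, piecewise linear with slopes in $\{-1,0,+1\}$ — hence $1$-Lipschitz — and it attains its maximum $\phi(0)=A\wedge B=N/(t_1\vee t_2)$ at the origin.

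The central estimate is then immediate and, importantly, uniform in $a$. Since $\int_\R p_a=1$, I would write
\[
\big|D(a)-(A\wedge B)\big|=\Big|\int_\R p_a(w)\big(\phi(w)-\phi(0)\big)\,\d w\Big|\le\int_\R p_a(w)\,|w|\,\d w=\sqrt{\tfrac{2a}{\pi}},
\]
using the bound $|\phi(w)-\phi(0)|\le|w|$ from $1$-Lipschitzness together with the exact first absolute moment of the Gaussian. No case distinction on whether the kernel is narrow or wide relative to the box is needed at this stage.

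It then remains to integrate in $\tau$ and separate the two contributions. The main term gives exactly $\frac{t_1t_2}{N}\int_0^2(A\wedge B)\,\d\tau=\frac{t_1t_2}{N}\cdot\frac{2N}{t_1\vee t_2}=2(t_1\wedge t_2)$, using $t_1t_2/(t_1\vee t_2)=t_1\wedge t_2$. For the error I would bound $a(\tau)\le\frac{2}{t_1\wedge t_2}N^\tau$ and use $\int_0^2 N^{\tau/2}\,\d\tau=\frac{2(N-1)}{\log N}$, which yields $\int_0^2\sqrt{2a(\tau)/\pi}\,\d\tau=O(N/\log N)$; after multiplying by the prefactor $t_1t_2/N$ this becomes $O(1/\log N)\to0$. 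Combining the two pieces gives the claimed limit.

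The point requiring the most care is recognizing that the error is negligible \emph{only} because of the $\tau$-integration. For $\tau$ near $2$ the kernel width $\sqrt a$ is of the same order $N$ as $A\wedge B$, so the pointwise bound $|D(a)-(A\wedge B)|\le\sqrt{2a/\pi}$ does not show $D(a)\approx A\wedge B$ there. What rescues the argument is the extra factor $1/\log N$ hidden in $\int_0^2 N^{\tau/2}\,\d\tau$, which is precisely the logarithmic gain matching the $\log N$ normalization in the statement. Beyond that, the only genuine bookkeeping is verifying the slope structure of $\phi$ and the identity $\phi(0)=A\wedge B$; everything else reduces to a single Gaussian moment computation.
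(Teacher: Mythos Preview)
Your proof is correct and takes a genuinely different route from the paper's. The paper applies Parseval's identity to rewrite the $(x_1,x_2)$-integral in Fourier variables as
\[
\frac{t_1t_2}{2\pi}\int_0^2\d\tau\int_\R \frac{1-\e^{{\rm i}z/t_1}}{z{\rm i}}\cdot\frac{1-\e^{-{\rm i}z/t_2}}{-z{\rm i}}\,\exp\!\left[-\Big(\tfrac{N^\tau}{t_1\wedge t_2}-\tfrac{1}{2t_1}-\tfrac{1}{2t_2}\Big)\tfrac{z^2}{N^2}\right]\d z,
\]
and then passes to the limit by dominated convergence using the integrable majorant $(1\wedge z^2)/z^2$; the value $2(t_1\wedge t_2)$ emerges from the Plancherel identity for the two indicators. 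You instead stay in real space: you recognize the $(x_1,x_2)$-integral as $\int p_a\phi$ with $\phi$ an explicit trapezoid, read off $\phi(0)=A\wedge B$ directly, and control the remainder by the single inequality $|D(a)-\phi(0)|\le\sqrt{2a/\pi}$ coming from $1$-Lipschitzness and the Gaussian first absolute moment.

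Your argument is more elementary (no Fourier analysis) and, as you note, yields the explicit rate $O(1/\log N)$ rather than a soft dominated-convergence limit. The paper's Fourier route has the advantage of fitting the pattern used in the other appendix lemmas, but for this particular statement your real-space computation is cleaner and shorter. Your observation that the pointwise bound is \emph{not} small for $\tau$ near $2$ and that the $\tau$-integration supplies the needed $1/\log N$ factor is exactly the right diagnosis.
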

\begin{proof}
       By Parseval's identity, we write the left-hand side as
       \begin{align*}
        &\frac{t_1t_2}{2\pi N} \int_0^2\d \tau \int_\R \widehat{1_{[0, N/t_1]}}(z)\overline{\widehat{1_{[0, N/t_2]}}}(z)\exp\left[-\left(\frac{N^{\tau}}{t_1\wedge t_2}-\frac{1}{2t_1}-\frac{1}{2t_2}\right)z^2\right]\d z\\
        &\quad =\frac{t_1t_2}{2\pi } \int_0^2\d \tau \int_\R 
        \frac{1-\e^{{\rm i}z/t_1}}{z {\rm i}}\frac{1-\e^{-{\rm i}z/t_2}}{-z {\rm i}}
        \exp\left[-\left(\frac{N^{\tau}}{t_1\wedge t_2}-\frac{1}{2t_1}-\frac{1}{2t_2}\right)\frac{z^2}{N^2}\right]\d z.
       \end{align*}
       Because
       \begin{align*}
        \left|\frac{1-\e^{{\rm i}z/t_1}}{z {\rm i}}\frac{1-\e^{-{\rm i}z/t_2}}{-z {\rm i}}\right| \lesssim \frac{1\wedge z^2}{z^2}, \quad \text{for all $z\neq 0$},
       \end{align*}
       by dominated convergence theorem, as $N\to\infty$ the preceding converges to 
       \begin{align*}
        \frac{t_1t_2}{2\pi } \int_0^2\d \tau \int_\R 
        \frac{1-\e^{{\rm i}z/t_1}}{z {\rm i}}\frac{1-\e^{-{\rm i}z/t_2}}{-z {\rm i}}\d z= 2t_1t_2\int_\R 1_{[0, 1/t_1]}(z)1_{[0, 1/t_2]}(z)\d z
        =2(t_1\wedge t_2).
       \end{align*}
    The proof is complete. 
\end{proof}

\end{document}